\documentclass[12pt,a4paper]{article}
\usepackage[latin1,utf8]{inputenc}
\usepackage{amsmath}
\usepackage{amsfonts}
\usepackage{amssymb}
\usepackage{color}
\usepackage[english]{babel}
\usepackage{amsmath,amsthm}
\usepackage{amsfonts}
\usepackage{appendix}
\usepackage{enumitem}
\usepackage{hyperref}
\usepackage{natbib}

\usepackage{graphicx,pict2e}
\usepackage{rotating}
\usepackage{algorithm}
\floatname{algorithm}{Algorithm}


\usepackage{color}

\newtheorem{theo}{Theorem}
\newtheorem{cor}{Corollary}
\newtheorem{prop}{Proposition}
\newtheorem{lemma}{Lemma}



\newcommand{\subscript}[2]{$#1 _ #2$}
\numberwithin{equation}{section}

\title{Exponential inequalities for sampling designs}
\date{\today}

\author{Guillaume Chauvet\thanks{ENSAI/IRMAR, Campus de Ker Lann, 35170 Bruz, France. E-mail: chauvet@ensai.fr} \\
Mathieu Gerber\thanks{University of Bristol, Clifton, Bristol BS8 1TW, UK. E-mail: mathieu.gerber@bristol.ac.uk}}

\begin{document}

\maketitle

\begin{abstract}
In this work we introduce a general approach, based on the martingale representation of a sampling design and Azuma-Hoeffding's inequality, to derive exponential  inequalities for the difference between a Horvitz-Thompson estimator and its expectation. Applying this idea, we  derive a new exponential inequality for conditionally negatively associated (CNA) sampling designs, which is shown  to  improve over two existing inequalities that can be used in this context. We establish that  Chao's procedure, Till\'e's elimination procedure and the generalized Midzuno method   are CNA sampling designs, and thus obtain an exponential inequality for these  three sampling procedures. Lastly, we show that our general approach can be useful beyond CNA sampling designs by deriving an exponential inequality for Brewer's method, for which the CNA property has not been established.
\end{abstract}

\section{Introduction} \label{sec0}

 In this paper we establish exponential inequalities for the difference between a Horvitz-Thompson estimator and its expectation under various sampling designs. The resulting bounds for the tail probabilities can be computed explicitly when the population size is known, and when the variable of interest is bounded by a known constant. Under these two conditions, the results presented below can be used in practice to compute tight confidence intervals for the quantity of interest, as well as the sample size needed to guarantee that the estimation error is not larger than some chosen tolerance level $\epsilon>0$, with probability at least equal to some chosen confidence level $1-\eta$. These inequalities are also needed to prove the consistency of estimated quantiles \citep{sha:rao:93,che:wu:02}.

An important by-product of this work is  to extend the list of sampling designs that have been proven to be negatively associated (NA, see Section \ref{sec3} for a definition). This list notably contains simple random sampling without replacement \citep{joa:pro:83}, conditional Poisson sampling and Pivotal sampling \citep{dubhashi2007positive}, as well as Rao-Sampford sampling and Pareto sampling \citep{bra:jon:12}. In this paper we show that Chao's procedure \citep{cha:82}, Till\'e's elimination procedure \citep{til:96} and the generalized Midzuno method \citep{mid:52,dev:til:98} are also NA sampling designs. Showing that a sampling procedure is NA   is particularly useful  since its statistical properties can then be readily deduced from the general theory for NA random variables. For instance, Hoeffding's inequality and the bounded difference inequality  have been proven to remain valid for NA random variables \citep{farcomeni2008some}, while a maximal inequality and a Bernstein-type inequality for NA random variables have been derived in  \citet{shao2000comparison} and \citet{ber:cle:19}, respectively.

Actually, we establish below that Chao's procedure, Till\'e's elimination procedure  and the generalized Midzuno method are not only NA, but also conditionally negatively associated (CNA, see Section \ref{sec3} for a definition), and derive a general exponential inequality for such sampling designs. As a consequence of this strong property, both a result obtained assuming equal inclusion probabilities  and some numerical experiments show that the inequality  we obtain for CNA sampling designs leads to significant  improvements compared to the bound obtained by applying the Bernstein inequality for NA random variables of \citet{ber:cle:19}.  However,  this latter is not uniformly dominated by the bound that we obtain. We also compare the inequality we derive for CNA sampling designs with the one obtained from the result in \citet{pem:14}, and show that the former is sharper than the latter.

The strategy we follow to derive our exponential inequalities is to work with the  martingale representation of a sampling design (see Section \ref{sub:martingale}) and then apply Azuma-Hoeffding's inequality. The final inequalities are finally obtained by controlling the terms appearing in the Azuma-Hoeffding's bounds.

In addition to allow the derivation of a sharp exponential inequality  for CNA  sampling designs, the strategy we follow has the merit to be applicable for sampling designs which are not NA. To the best of our knowledge, an exponential inequality for such sampling designs
  only exists for successive sampling \citep{ros:72}, as recently proved by \citet{ben2018weighted}. Using our general approach we derive an exponential inequality for Brewer's method \citep{bre:63, bre:75}, which is a very simple draw by draw procedure for the selection of a sample with any prescribed set of inclusion probabilities. Whether or not the NA property holds for Brewer's method remains an open problem.

The rest of the paper is organized as follows. In Section \ref{sec1} we introduce the set up that we will consider throughout this work, as well as the martingale representation of a sampling design and a key preliminary result (Theorem \ref{theo1}).  In Section \ref{sec3} we give the exponential inequality  for CNA sampling designs (Theorem \ref{theo2}) and establish that Chao's procedure, Till\'e's elimination procedure  and the generalized Midzuno method are CNA sampling methods (Theorem \ref{prop1}). In this section, we also compare the bound obtained for CNA sampling procedures with the one  obtained by applying the Bernstein inequality \citep{ber:cle:19} and with the one derived from  the results of \cite{pem:14}. Section \ref{sec4} contains the result for Brewer's method. We conclude in Section \ref{sec:conc}. All the proofs are gathered in the appendix.

\section{Preliminaries} \label{sec1}

\subsection{Set-up and notation}

We consider a finite population $U$ of size $N$, with a variable of interest $y$ taking the value $y_k$ for the unit $k \in U$. We suppose that a random sample $S$ is selected by means of a sampling design $p(\cdot)$, and we let $\pi_k = Pr(k \in S)$ denote the probability for unit $k$ to be selected in the sample.  We let $\pi_U=( \pi_1,\ldots,\pi_N)^{\top}$ denote the vector of inclusion probabilities, and
    \begin{eqnarray} \label{sec1:eq1}
    I_U & = & (I_1,\ldots,I_N)^{\top}
    \end{eqnarray}
denote the vector of sample membership indicators. We let $n = \sum_{k \in U} \pi_k$ denote the average sample size. Recall that $p(\cdot)$ is called a fixed-size sampling design if only the subsets $s$ of size $n$ have non-zero selection probabilities $p(s)$.

 We suppose that $\pi_k>0$ for any $k \in U$, which means that there is no coverage bias. When the units are selected with equal probabilities, we have
    \begin{eqnarray} \label{sec1:eq2}
      \pi_k & = & \frac{n}{N}.
    \end{eqnarray}
When some positive auxiliary variable $x_k$ is known at the sampling stage for any unit $k$ in the population, another possible choice is to define inclusion probabilities proportional to $x_{k}$. This leads to probability proportional to size ($\pi$-ps) sampling, with
    \begin{eqnarray} \label{sec1:eq3}
      \pi_{k} & = & n \frac{x_{k}}{\sum_{l \in U} x_{l}}.
    \end{eqnarray}
Equation (\ref{sec1:eq3}) may lead to probabilities greater than $1$ for units with large values of $x_{k}$. In such case, these probabilities are set to $1$, and the other are recomputed until all of them are lower than $1$ \citep[][Section 2.10]{til:06}.

The Horvitz-Thompson (HT) estimator is
    \begin{eqnarray} \label{sec1:eq4}
      \hat{t}_{y\pi} & = & \sum_{k \in S} \check{y}_k
    \end{eqnarray}
where $\check{y}_k=y_k/\pi_k$. The HT-estimator is design-unbiased for the total $t_y=\sum_{k \in U} y_k$, in the sense that $E_p(\hat{t}_{y\pi})=t_y$, with $E_p(\cdot)$ the expectation with respect to the sampling design.

\subsection{Martingale representation\label{sub:martingale}}

A sampling design may be implemented by several sampling algorithms. For example, with a draw by draw representation, the sample $S$ is selected in $n$ steps and each step corresponds to the selection of one unit. With a sequential representation, each of the $N$ units in the population is successively considered for sample selection, and the sample is therefore obtained in $N$ steps. In this paper, we are interested in the representation of a sampling design by means of a martingale.

We say that $p(\cdot)$ has a martingale representation \citep[][Section 3.4]{til:06} if we can write the vector of sample membership indicators as
    \begin{eqnarray*}
      I_U & = & \pi_U + \sum_{t=1}^T \delta(t),
    \end{eqnarray*}
where $\{\delta(t);~t=1,\ldots,T\}$ are martingale increments with respect to some filtration $\{\mathcal{F}_t;~t=0,\ldots,T-1\}$. This definition is similar to that in \citet[Definition 34]{til:06}, although we express it in terms of martingale increments rather than in terms of the martingale itself.

 We confine ourselves to the study of a sub-class of martingale representations, proposed by \citet{dev:til:98} and called the general splitting method. There is no loss of generality of focussing on this particular representation,
 since it can be shown that any sampling method may be represented as a particular case of the splitting method in $T=N$ steps, see Appendix \ref{app0}. The method is described in \citet[Algorithm 6.9]{til:06}, and is reminded in Algorithm \ref{algo1}. Equations \eqref{algo1:eq1} and \eqref{algo1:eq2} ensure that $\delta(t)$ is a martingale increment, and equation \eqref{algo1:eq3} ensures that at any step $t=1,\ldots,T$, the components of $\pi(t)$ remain between $0$ and $1$. Our definition of the splitting method is slightly more general than in \citet{til:06}.

\begin{algorithm}[t]
\begin{enumerate}
    \item We initialize with $\pi(0)=\pi_U$.
    \item At Step $t$, if some components of $\pi(t-1)$ are not $0$ nor $1$, proceed as follows:
        \begin{enumerate}
          \item Build a set of $M_t$ vectors $\delta^1(t),\ldots,\delta^{M_t}(t)$ 
          and a set of $M_t$ non-negative scalars $\alpha^1(t),\ldots,\alpha^{M_t}(t)$ such that
            \begin{eqnarray}
              \sum_{i=1}^{M_t} \alpha^i(t) & = & 1, \label{algo1:eq1} \\
              \sum_{i=1}^{M_t} \alpha^i(t) \delta^i(t) & = & 0, \label{algo1:eq2} \\
              0 \leq \pi(t-1)+ \delta^{i}(t) \leq 1 & & \textrm{ for all } i=1,\ldots,M_t, \label{algo1:eq3}
            \end{eqnarray}
          where the inequalities in \eqref{algo1:eq3} are interpreted component-wise.
          \item Take $\delta(t)=\delta^{i}(t)$ with probability $\alpha^i(t)$, and $\pi(t)=\pi(t-1)+\delta(t)$.
        \end{enumerate}
    \item The algorithm stops at step $T$ when all the components of $\pi(T)$ are $0$ or $1$. We take $I_U=\pi(T)$.
 \end{enumerate}
\caption{General splitting method} \label{algo1}
\end{algorithm}

 If the sampling design $p(\cdot)$ is described by means of the splitting method in Algorithm \ref{algo1}, we may rewrite
    \begin{eqnarray} \label{mar:dec}
      \hat{t}_{y\pi}-t_y = \sum_{t=1}^T \xi(t) & \textrm{ where } & \xi(t) = \sum_{k \in U(t)} \check{y}_k \delta_k(t),
    \end{eqnarray}
where $\{\xi(t);~t=1,\ldots,T\}$ are martingale increments with respect to $\{\mathcal{F}_t;~t=0,\ldots,T-1\}$, and where
    \begin{eqnarray*}
      U(t) & = & \{k \in U;~\delta_k(t) \neq 0\}
    \end{eqnarray*}
is the subset of units which are treated at Step $t$ of the splitting method. Writing $\xi(t)$ in equation (\ref{mar:dec}) in terms of a sum over $U(t)$ rather than a sum over $U$ is helpful to establish the order of magnitude of $\xi(t)$, since $U(t)$ may be much smaller than $U$ for particular sampling designs like pivotal sampling \citep{dev:til:98,cha:12} or the cube method \citep{dev:til:04}.

\subsection{A preliminary result}

The inequalities presented in the next two sections rely on Theorem \ref{theo1} below, which provides an exponential inequality for a general sampling design $p(\cdot)$. Theorem \ref{theo1} is a direct consequence of the Azuma-Hoeffding inequality, and its proof is therefore omitted.

\begin{theo} \label{theo1}
Suppose that the sampling design $p(\cdot)$ is described by the splitting method in Algorithm \ref{algo1}
, and that some constants $\{a_t(n,N);\,\, t=1,\dots,T\}$ exist such that
        \begin{eqnarray*}
       Pr\Big( \sum_{k \in U(t)} |\delta_k(t)| \leq a_t(n,N) \Big)=1,\quad t=1,\dots,T.
        \end{eqnarray*}
Then for any $\epsilon>0$,
    \begin{eqnarray} \label{theo1:eq2}
     Pr(\hat{t}_{y\pi}-t_y \geq N \epsilon) & \leq & \exp\left(-\frac{N^2 \epsilon^2}{2 \{\sup|\check{y}_k|\}^2 \sum_{t=1}^T \{a_t(n,N)\}^2} \right).
    \end{eqnarray}
\end{theo}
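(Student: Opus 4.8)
The plan is to exploit the martingale representation \eqref{mar:dec} of $\hat{t}_{y\pi}-t_y$ and then invoke the Azuma--Hoeffding inequality. Recall from \eqref{mar:dec} that
\begin{eqnarray*}
\hat{t}_{y\pi}-t_y & = & \sum_{t=1}^T \xi(t), \qquad \xi(t)=\sum_{k\in U(t)}\check{y}_k\,\delta_k(t),
\end{eqnarray*}
where $\{\xi(t);\,t=1,\dots,T\}$ is a sequence of martingale increments with respect to the filtration $\{\mathcal{F}_t;\,t=0,\dots,T-1\}$. Since Azuma--Hoeffding applies to precisely such a bounded martingale difference sequence (here the martingale $\sum_{t=1}^T\xi(t)$ starts at $0$), the only ingredient that remains to be supplied is a deterministic, almost-sure bound on each $|\xi(t)|$.

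First I would produce this per-step bound. The triangle inequality followed by the definition of $\sup|\check{y}_k|$ gives, surely,
\begin{eqnarray*}
|\xi(t)| & \le & \sum_{k\in U(t)} |\check{y}_k|\,|\delta_k(t)| \;\le\; \Big(\sup_{k\in U}|\check{y}_k|\Big)\sum_{k\in U(t)}|\delta_k(t)|.
\end{eqnarray*}
By the hypothesis of the theorem, $\sum_{k\in U(t)}|\delta_k(t)|\le a_t(n,N)$ with probability one, so that
\begin{eqnarray*}
|\xi(t)| & \le & c_t \;:=\; \Big(\sup_{k\in U}|\check{y}_k|\Big)\,a_t(n,N)\quad\text{almost surely,}
\end{eqnarray*}
where $c_t$ is a deterministic constant, exactly as required by Azuma--Hoeffding.

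It then remains to apply the Azuma--Hoeffding inequality to $\sum_{t=1}^T\xi(t)$ with the bounds $c_t$ and deviation level $\lambda=N\epsilon$, which yields
\begin{eqnarray*}
Pr\big(\hat{t}_{y\pi}-t_y\ge N\epsilon\big) & \le & \exp\!\left(-\frac{(N\epsilon)^2}{2\sum_{t=1}^T c_t^2}\right) \;=\; \exp\!\left(-\frac{N^2\epsilon^2}{2\{\sup|\check{y}_k|\}^2\sum_{t=1}^T\{a_t(n,N)\}^2}\right),
\end{eqnarray*}
which is exactly \eqref{theo1:eq2}. I do not expect a substantial obstacle here: once the martingale structure of \eqref{mar:dec} is taken for granted, the argument is a direct specialisation of Azuma--Hoeffding. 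The only point deserving care is that the hypothesis delivers a bound that is both deterministic (not merely in expectation) and uniform in $k$ through the factor $\sup|\check{y}_k|$, so that the $c_t$ are legitimate constants to feed into the inequality; this is precisely why the statement was phrased as an almost-sure control of $\sum_{k\in U(t)}|\delta_k(t)|$.
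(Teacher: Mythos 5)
Your proof is correct and follows exactly the route the paper indicates: the paper omits the proof of Theorem \ref{theo1} precisely because it is, as you write, a direct application of the Azuma--Hoeffding inequality to the martingale increments $\xi(t)$ of \eqref{mar:dec}, with the almost-sure bound $|\xi(t)|\le \{\sup|\check{y}_k|\}\,a_t(n,N)$ obtained by the triangle inequality. No gaps; the one point you flag (that the $c_t$ must be deterministic constants) is indeed the only point requiring care, and your hypothesis handles it.
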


 We are particularly interested in sampling designs with fixed size $n$. By using a draw by draw representation \citep[][Section 3.6]{til:06}, any such sampling design may be described as a particular case of the splitting method in $T=n$ steps, see Appendix \ref{app11}. 

 Based on this observation, the exponential inequalities  derived in Sections \ref{sec3} and \ref{sec4} are obtained by showing that, for the sampling designs considered, the  quantities $a_t(n,N)$ appearing in Theorem \ref{theo1} are bounded above by a constant $C$, uniformly in $t=1,\ldots,n$. In this case, Theorem \ref{theo1} yields
\begin{equation}\label{theo1:eq3}
\begin{split}
 Pr( \hat{t}_{y\pi}-t_y \geq N \epsilon)  &\leq \exp\left(-\frac{N^2 \epsilon^2}{2 \{\sup|\check{y}_k|\}^2 n C^2} \right).
 \end{split}
\end{equation}
Since the bound  in \eqref{theo1:eq3} also holds for $ Pr(t_y-\hat{t}_{y\pi} \geq N \epsilon)$,  multiplying it by two provides an upper bound for  the tail probability $ Pr(|\hat{t}_{y\pi}-t_y| \geq N \epsilon)$.

It is worth mentioning that the bound \eqref{theo1:eq2} is not tight and can be improved using a refined version of Azuma-Hoeffding inequality, such as the one derived in \citet{sason2011refined}. The resulting bound would however have a more complicated expression, and for that reason we prefer to stick with the classical Azuma-Hoeffding inequality in this paper.

\subsection{Assumptions}

In what follows we shall consider the following assumptions:
    \begin{enumerate}[label=(\subscript{H}{{\arabic*}})]
      \item\label{H:CNA} The sampling design is of fixed size $n$. Also, for any $k \neq l \in U$, for any subset $I=\{j_1,\ldots,j_p\} \subset U \setminus \{k,l\}$ with $p \leq n-2$, we have
        \begin{eqnarray} \label{sec1:eq5}
          \pi_{kl|j_1,\ldots,j_p} & \leq & \pi_{k|j_1,\ldots,j_p} \pi_{l|j_1,\ldots,j_p},
        \end{eqnarray}
       with the notation $\pi_{\cdot|j_1,\ldots,j_p} \equiv Pr(\cdot \in S|j_1,\ldots,j_p \in S)$,
       \item\label{H:Bound}  There exists some constant $M$ such that $|y_k| \leq M$ for any $k \in U$,
      \item\label{H:pi}  There exists some constant $c>0$ such that $cN^{-1} n \leq \pi_k$ for any $k \in U$.
    \end{enumerate}

We call assumption \ref{H:CNA} the conditional Sen-Yates-Grundy conditions: with $I = \emptyset$, assumption \ref{H:CNA} implies the usual Sen-Yates-Grundy conditions. Equation  \eqref{sec1:eq5} is equivalent to:
    \begin{eqnarray} \label{sec1:eq6}
      \pi_{k|j_1,\ldots,j_p,l} & \leq & \pi_{k|j_1,\ldots,j_p} \textrm{ for any distinct units } k,l,j_1,\ldots,j_p.
    \end{eqnarray}
Equation \eqref{sec1:eq6} states that adding some unit $l$ to the units already selected always decreases the conditional probability of selection of the remaining units.

Assumption \ref{H:CNA} is linked to the property of conditional negative association,
as discussed further in Section \ref{sec3}  where we consider several sampling designs for which we prove that \ref{H:CNA} holds.

Assumptions \ref{H:Bound} and \ref{H:pi} are common in survey sampling. It is assumed in \ref{H:Bound} that the variable $y_k$ is bounded. It is assumed in \ref{H:pi} that no unit has a first-order inclusion probability of smaller order than the other units, since the mean value of inclusion probabilities is
    \begin{eqnarray*}
      \bar{\pi} & = & \frac{1}{N} \sum_{k \in U} \pi_k = \frac{n}{N}.
    \end{eqnarray*}

\section{CNA  martingale sampling designs} \label{sec3}

 A sampling design $p(\cdot)$ is said to be negatively associated (NA) if for any disjoint subsets $A,B \subset U$ and any non-decreasing function $f,g$, we have
    \begin{eqnarray} \label{sec3:eq1}
      Cov\left[f(I_k,k \in A),g(I_l,l \in B) \right] & \leq & 0.
    \end{eqnarray}
It is said to be conditionally negatively associated (CNA) if the sampling design obtained by conditioning on any subset of sample indicators is NA. Obviously, CNA implies NA.

It follows from the Feder-Mihail theorem \citep{fed:mih:92} that for any fixed-size sampling design, our Assumption \ref{H:CNA} is equivalent to the CNA property. Assumption \ref{H:CNA} therefore gives a convenient way to prove CNA.

\begin{theo} \label{theo2}
If Assumption  
\ref{H:CNA} holds, then
    \begin{eqnarray} \label{cor1:eq0}
      Pr(\hat{t}_{y\pi}-t_y \geq N \epsilon) & \leq & \exp\left(-\frac{N^2 \epsilon^2}{8 n \{\sup|\check{y}_k|\}^2 } \right),\quad\forall\epsilon\geq 0.
    \end{eqnarray}
If in addition Assumptions \ref{H:Bound}-\ref{H:pi} hold, then
    \begin{eqnarray*}
     Pr(\hat{t}_{y\pi}-t_y \geq N \epsilon) & \leq & \exp\left(-\frac{nc^2 \epsilon^2}{8 M^2} \right),\quad\forall\epsilon\geq 0.
    \end{eqnarray*}
\end{theo}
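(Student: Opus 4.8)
The plan is to specialize Theorem \ref{theo1} to the draw-by-draw representation of the fixed-size design and to show that the uniform constant $C=2$ is admissible, i.e.\ that $\sum_{k \in U(t)} |\delta_k(t)| \leq 2$ almost surely at every step $t$. As noted after Theorem \ref{theo1} (and established in Appendix \ref{app11}), a fixed-size design of size $n$ can be written as a splitting method in $T=n$ steps in which exactly one new unit is drawn per step, and I would work with the representation in which $\pi_k(t) = \pi_{k|j_1,\ldots,j_t}$ is the conditional inclusion probability of unit $k$ given that $j_1,\ldots,j_t$ are the units selected in the first $t$ draws, so that $\delta_k(t) = \pi_k(t) - \pi_k(t-1)$.

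First I would classify the coordinates of $\delta(t)$ by sign. Writing $m = j_t$ for the unit drawn at step $t$, we have $\pi_m(t)=1$, hence $\delta_m(t) = 1 - \pi_{m|j_1,\ldots,j_{t-1}} \geq 0$; for a previously drawn unit $k \in \{j_1,\ldots,j_{t-1}\}$ both $\pi_k(t)$ and $\pi_k(t-1)$ equal $1$, so $\delta_k(t)=0$; and for any remaining unit $k$, Assumption \ref{H:CNA} in the equivalent form \eqref{sec1:eq6} gives $\pi_{k|j_1,\ldots,j_{t-1},m} \leq \pi_{k|j_1,\ldots,j_{t-1}}$, i.e.\ $\delta_k(t)\leq 0$. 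Thus $\delta(t)$ has a single non-negative coordinate, the freshly drawn unit, and all its other coordinates are non-positive. I expect this sign determination to be the main obstacle, the delicate point being to justify that the draw-by-draw conditional inclusion probabilities coincide with the set-conditional probabilities $\pi_{k|j_1,\ldots,j_t}$ appearing in \eqref{sec1:eq6}, so that the conditional Sen--Yates--Grundy condition applies verbatim.

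Next I would exploit the fixed size. Since $\sum_{k\in U} I_k = n$ almost surely, conditioning on the first $t$ and the first $t-1$ draws yields $\sum_{k\in U}\pi_k(t)=\sum_{k\in U}\pi_k(t-1)=n$, and therefore $\sum_{k\in U}\delta_k(t)=0$. Combined with the sign analysis, the positive part of $\delta(t)$ equals its negative part, so
\[
\sum_{k\in U(t)} |\delta_k(t)| \;=\; 2\,\delta_m(t) \;=\; 2\bigl(1 - \pi_{m|j_1,\ldots,j_{t-1}}\bigr) \;\leq\; 2 .
\]
Hence $a_t(n,N)=2$ is admissible for every $t=1,\ldots,n$, and substituting $C=2$ into \eqref{theo1:eq3}, with $2\cdot 2^2 = 8$, delivers the first inequality of the theorem; the case $\epsilon=0$ holds trivially since the right-hand side equals $1$.

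Finally I would derive the second bound by controlling $\sup_k|\check{y}_k|$. Under Assumptions \ref{H:Bound} and \ref{H:pi}, for every $k$ one has $|\check{y}_k| = |y_k|/\pi_k \leq M/(cn/N) = MN/(cn)$, so that $\{\sup_k|\check{y}_k|\}^2 \leq M^2 N^2/(c^2 n^2)$. Plugging this into the first inequality and simplifying gives the exponent $-nc^2\epsilon^2/(8M^2)$, which completes the proof.
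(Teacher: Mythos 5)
Your proposal is correct and follows essentially the same route as the paper: represent the fixed-size design as a draw-by-draw splitting method in $T=n$ steps, use the conditional Sen--Yates--Grundy condition \ref{H:CNA} to fix the signs of the coordinates of $\delta(t)$, and use the fixed-size identities $\sum_{k\in U(t)}\pi_{k|J_1,\ldots,J_{t-1}}=n-t+1$ and $\sum_{k\in U(t)\setminus\{i\}}\pi_{k|J_1,\ldots,J_{t-1},i}=n-t$ to get $\sum_{k\in U(t)}|\delta_k(t)|=2(1-\pi_{J_t|J_1,\ldots,J_{t-1}})\leq 2$, then invoke Theorem \ref{theo1} with $C=2$. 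The ``delicate point'' you flag (that the draw-by-draw conditional probabilities coincide with the set-conditional probabilities in \eqref{sec1:eq6}) is exactly what the paper's Lemmas \ref{lem1} and \ref{lem2} in Appendix \ref{app11} establish, so your argument matches the paper's in all essentials.
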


Remark that it is shown in Theorem \ref{theo2} that under the Assumption
\ref{H:CNA} the inequality \eqref{theo1:eq3} holds with $C=2$.

\subsection{Comparison with \citet{ber:cle:19}\label{sub:compare}}

Under Assumption \ref{H:CNA}  the sampling design is NA and therefore an alternative exponential inequality can be obtained from Theorems 2 and 3 in \citet{ber:cle:19}. Using these latter results, we obtain:
\begin{equation}\label{eq:Bertail}
\begin{split}
Pr&(\hat{t}_{y\pi} -t_y \geq N \epsilon)\\
 &\leq 2 \exp\bigg( -\frac{   \epsilon^2 N }{8 (1-n/N)    \sup\{y_k^2/\pi_k\}+     \epsilon (4/3)  \sup(|\check{y}_k|)}\bigg).
\end{split}
\end{equation}
It is important to note that Theorem 2 in \citet{ber:cle:19} holds for any NA sampling design, while our Theorem \ref{theo2} is limited to sampling designs with the stronger CNA assumption (implying NA).

 Providing a detailed comparison of the upper bounds in \eqref{cor1:eq0} and in \eqref{eq:Bertail} is beyond the scope of the paper. The following proposition however shows that, as one may expect, the stronger CNA condition imposed in our Theorem \ref{theo2} may lead to a sharper exponential inequality.

\begin{prop}\label{prop0}
Assume that $\pi_k=n/N$ for all $k\in U$. Then,  the upper bound in  \eqref{cor1:eq0} is  smaller than the upper bound in \eqref{eq:Bertail}
\begin{itemize}
\item for all $\epsilon> 0 $  if  $n     < \big(\log(2)(8/9)\big)^{1/3}N^{2/3}$
\item for all $\epsilon\in\Big(0, \big(3-\sqrt{2}\big)(n/N)\sup|y_k|\Big]$ if $n     \geq \log(2)(8/9)(N/n)^2$.
\end{itemize}
\end{prop}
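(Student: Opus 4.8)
The plan is to pass to logarithms and reduce the comparison of the two bounds to a single scalar inequality. Writing $m=\sup|y_k|$ and using $\pi_k=n/N$, so that $\sup|\check y_k|=mN/n$ and $\sup\{y_k^2/\pi_k\}=m^2N/n$, the bound in \eqref{cor1:eq0} equals $\exp\!\big(-n\epsilon^2/(8m^2)\big)$ and the bound in \eqref{eq:Bertail} equals $2\exp\!\big(-n\epsilon^2/[8(1-n/N)m^2+(4/3)\epsilon m]\big)$. Taking logarithms, the assertion that \eqref{cor1:eq0} is smaller than \eqref{eq:Bertail} is equivalent to $\phi<\log 2$, where, after introducing the scale-free variables $s=\epsilon/m$ and $f=n/N$,
\[ \phi=\frac{n\,s^2\,(8f-\tfrac43 s)}{8\,[\,8(1-f)+\tfrac43 s\,]}. \]
Here the two brackets sum to $8$, and $\phi\le 0$ as soon as $s\ge 6f$, so only $s\in(0,6f)$ needs attention.

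For the first bullet I would prove the clean uniform bound $\phi\le \tfrac98 nf^2=9n^3/(8N^2)$, valid for every $s\ge 0$. Cross-multiplying, this is equivalent to $h(s)\ge 0$ with $h(s)=9f^2[\,8(1-f)+\tfrac43 s\,]-s^2(8f-\tfrac43 s)$, and the crux is the factorisation
\[ h(s)=72f^2(1-f)+\tfrac43\,s\,(s-3f)^2, \]
which is manifestly nonnegative. Granting this, $\phi<\log 2$ for all $\epsilon>0$ follows at once from $9n^3/(8N^2)<\log 2$, i.e.\ from the stated condition $n<(\tfrac89\log 2)^{1/3}N^{2/3}$.

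For the second bullet the uniform bound no longer lies below $\log 2$, so I would use the exact identity $\phi=\tfrac98 nf^2-n\,h(s)/\{8[\,8(1-f)+\tfrac43 s\,]\}$ together with the factorisation above. On the relevant range $\phi$ is increasing---its unique interior maximiser on $(0,6f)$, obtained from a quadratic, lies to the right of the endpoint---so it is enough to control $\phi$ at $s_1=(3-\sqrt2)f$, that is at $\epsilon=(3-\sqrt2)(n/N)\sup|y_k|$. The endpoint is tailored so that $(s-3f)^2\ge 2f^2$ throughout $(0,s_1]$---equivalently $s^2-6fs+7f^2\ge 0$, whose smaller root is exactly $(3-\sqrt2)f$---which makes the slack term $\tfrac43 s(s-3f)^2$ in $h(s)$ large enough to pull $\phi$ back below $\log 2$ under the complementary condition $9n^3/(8N^2)\ge\log 2$.

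The step I expect to be the main obstacle is the second bullet: converting ``the slack term is large enough'' into a single inequality valid on the whole interval requires lower-bounding $h(s)$ and upper-bounding $8[\,8(1-f)+\tfrac43 s\,]$ simultaneously, and the resulting estimate must dovetail with the condition on $n$. The bookkeeping is delicate because the bound is effective only when the sampling fraction $f=n/N$ is small, which is precisely the regime $n\lesssim N^{2/3}$ selected by the threshold; accordingly I would keep careful track of which powers of $f$ may be discarded and would close the argument by reducing the final interval inequality to the sign of a polynomial in $s/f$ on $(0,\,3-\sqrt2]$.
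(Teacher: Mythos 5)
Your reduction is the same as the paper's: after substituting $\pi_k=n/N$ and taking logarithms, the comparison becomes the sign of the cubic $-\tfrac43 ns^3+8n f s^2-\tfrac{32}{3}\log(2)\,s-64(1-f)\log 2$ in $s=\epsilon/\sup|y_k|$ (with $f=n/N$), which is exactly your condition $\phi<\log 2$. For the first bullet your argument is complete and correct: the identity $h(s)=72f^2(1-f)+\tfrac43 s(s-3f)^2$ checks out, it yields the uniform bound $\phi\leq\tfrac98 nf^2=9n^3/(8N^2)$, and the stated threshold on $n$ is precisely $9n^3/(8N^2)<\log 2$. The paper reaches the same threshold by discarding the negative constant term, dividing by $s$, and requiring the resulting concave quadratic $g$ to have negative discriminant; your factorisation is an equivalent but more self-contained way of locating the same maximum (at $s=3f$), and I would keep it.

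The second bullet is where your proposal does not close, and the obstacle is not bookkeeping. Your monotonicity claim is fine (one checks $\phi'(3f)>0$ for $f<1$, so $\phi$ increases on $(0,3f]\supset(0,s_1]$), hence everything reduces to $\phi(s_1)\leq\log 2$ with $s_1=(3-\sqrt2)f$. But at $s_1$ your inequality $(s-3f)^2\geq 2f^2$ holds with equality, so your estimate computes $\phi(s_1)$ exactly: $\phi(s_1)=\tfrac78 nf^2-7nf^2(1-f)\big/\big(8(1-f)+\tfrac43 s_1\big)$, and this is not below $\log 2$ under the hypothesis $nf^2\geq\tfrac89\log2$ alone. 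For instance $n=9$, $N=10$ gives $\phi(s_1)\approx 4.5$ (equivalently, at $\epsilon=s_1\sup|y_k|$ the two bounds are about $0.101$ and $0.0023$, in the wrong order). So no sharpening of the slack-term estimate can rescue the step; an additional upper bound of the form $nf^2\leq\tfrac87\log 2$ is needed. For comparison, the paper's own proof of this bullet bounds the smaller root $x_1^*$ of $g$ below by $(3-\sqrt2)fv$, but its closed-form simplification of $x_1^*$ is incorrect: the correct expression is $x_1^*=3fv-v\sqrt{9f^2-8\log(2)/n}$, and $x_1^*\geq(3-\sqrt2)fv$ is equivalent to the same extra condition $nf^2\leq\tfrac87\log 2$. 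Your instinct that this is the main obstacle is right, but the target inequality is not implied by the stated hypothesis, so you should not expect the interval estimate to dovetail with the condition on $n$ as written.
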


This proposition  suggests that  Theorem \ref{theo2} improves   the inequality  \eqref{eq:Bertail} when the sample size $n$ is small or when $\epsilon$ is not too large. Remark that in case of equal probabilities, the inequalities discussed in this paper are useful only for  $\epsilon<\epsilon^*:=2(1-n/N)\sup|y_k|$, since  $Pr (\hat{t}_{y\pi} -t_y \geq N \epsilon)=0$ for all $\epsilon>\epsilon^*$.  Therefore, under the assumptions of the Proposition \ref{prop0},  if $(n/N)\geq 2/(5-\sqrt{2})\approx 0.56$ then the upper bound in \eqref{cor1:eq0} is smaller than the upper bound in \eqref{eq:Bertail}  for all relevant values of $\epsilon>0$, that is for all $\epsilon\in(0,\epsilon^*]$.

To assess the validity of the conclusions of Proposition \ref{prop0} when we have unequal inclusion probabilities $(\pi_1,\dots,\pi_N)$ we consider the numerical example proposed in  \citet{ber:cle:19}. More precisely, we let $\gamma_1,\dots,\gamma_N$ be $N=10^4$ independent draws from the exponential distribution with mean 1, $(\epsilon_1,\dots,\epsilon_N)$ be $N$ independent draws from the $\mathcal{N}(0,1)$ distribution and,  for $k\in U$, we let
$$
x_k=1+\gamma_k,\quad \pi_k=\frac{n x_k}{\sum_{l\in U}x_l},\quad y_k= x_k+\sigma\epsilon_k
$$
where the parameter  $\sigma\geq 0$ allows to control the correlation between $x_k$ and $y_k$.

Figure \ref{fig:comp} shows the difference between  the upper bound in  \eqref{eq:Bertail}
 and the upper bound in \eqref{cor1:eq0} as a function of $\epsilon$,    for $\sigma\in\{0,0.5,1,5\}$ and for $n\in\{10^2,10^{2.5},10^3,10^{3.5}\}$. The results in Figure \ref{fig:comp} confirm that the inequality  \eqref{cor1:eq0} tends to be sharper than the inequality \eqref{eq:Bertail} when $n$ is small and/or when $\epsilon$ is not too large.  It is also worth noting that, globally, the improvements of the former inequality compared to the latter increase as $\sigma$ decreases (i.e.\ as the correlation between $x_k$ and $y_k$ increases). In particular, for $\sigma=0$ the bound \eqref{cor1:eq0} is smaller than the bound \eqref{eq:Bertail} for all the considered values of $n$ and of $\epsilon$.

\begin{figure}
\includegraphics[scale=0.26, trim=1.5cm 0 0 0]{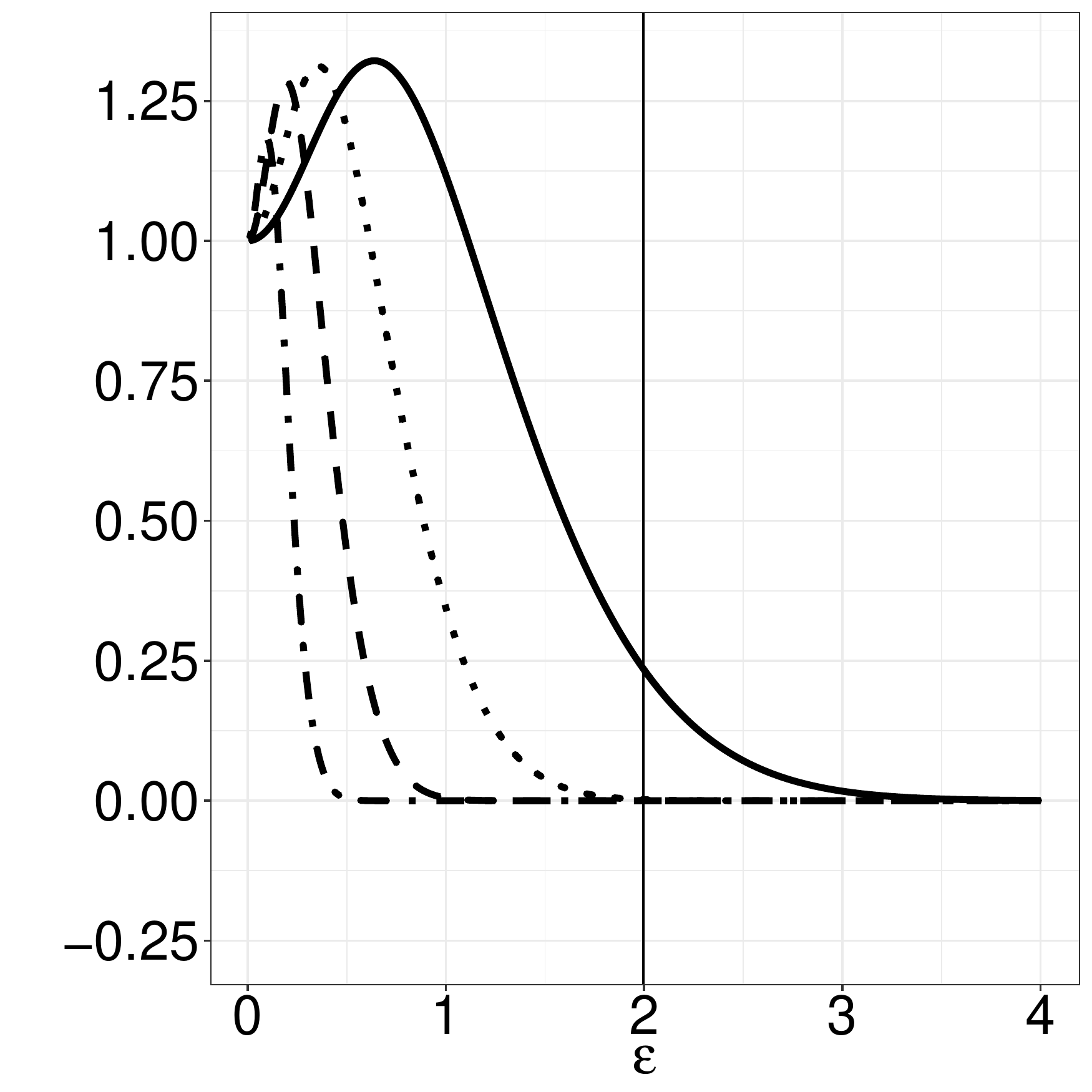}\includegraphics[scale=0.26]{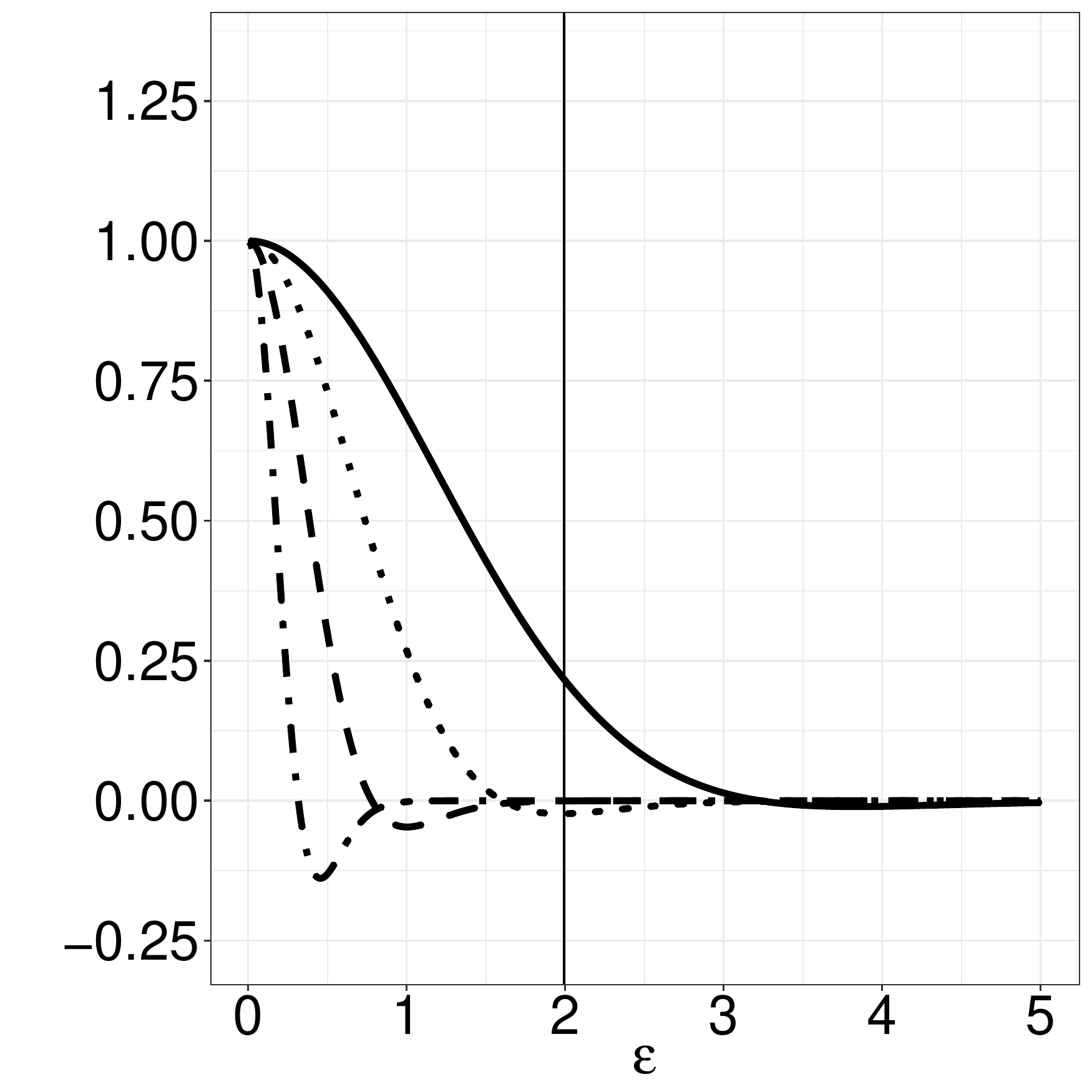}\includegraphics[scale=0.26]{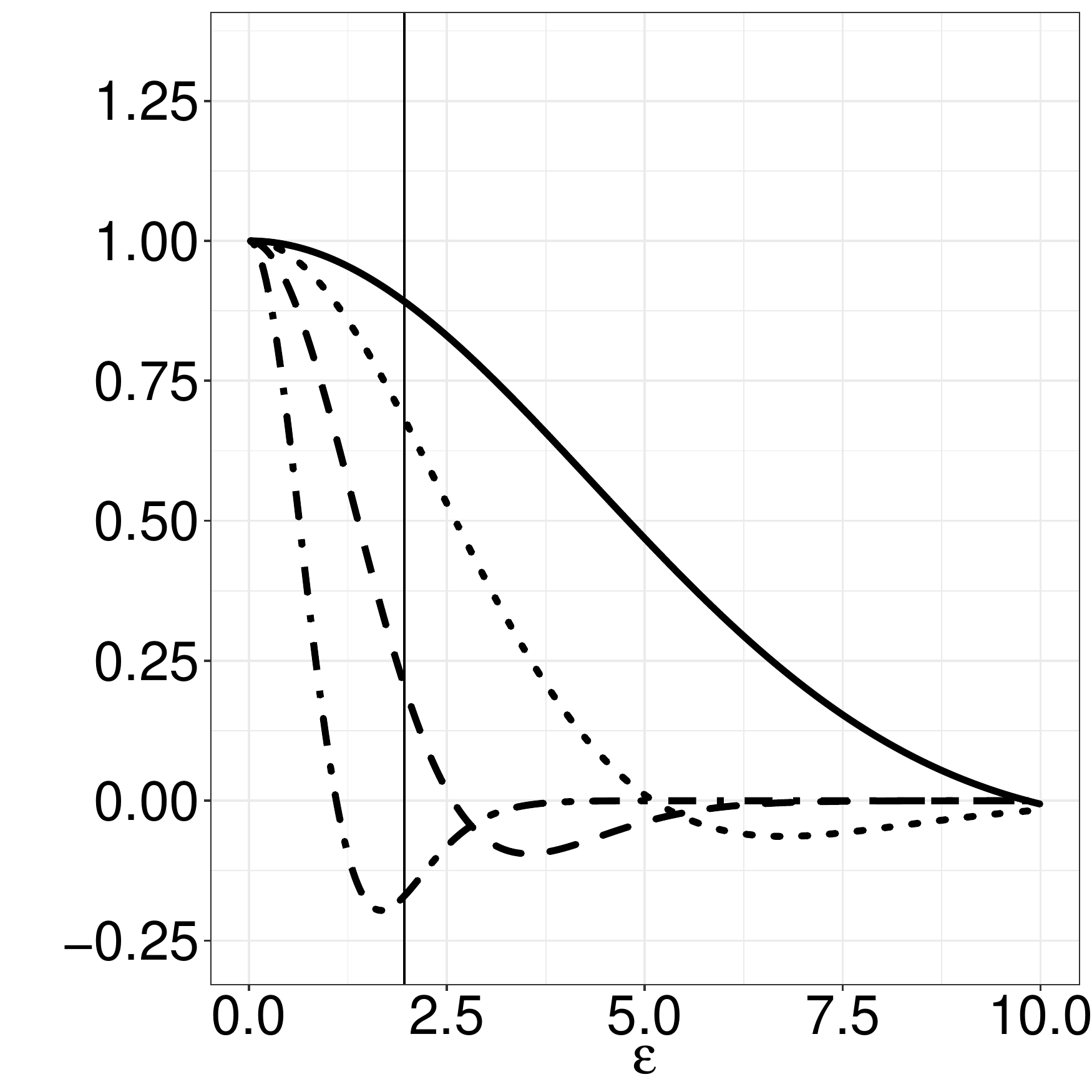}
\caption{Difference between  the upper bound  in  \eqref{eq:Bertail}
 and the upper bound in \eqref{cor1:eq0} as a function of $\epsilon$ and for the example of Section \ref{sub:compare}. Results are for $\sigma=0$ (left plot), $\sigma=1$ (middle plot) and $\sigma=5$ and are obtained for $n=10^2$ (black lines), $n=10^{2.5}$ (dotted lines), $n=10^3$ (dashed lines) and for $n=10^{3.5}$. The vertical lines show the population mean $N^{-1}\sum_{k\in Y}y_k$.\label{fig:comp}}
\end{figure}

\subsection{Comparison with \citet{pem:14}\label{sub:compare2}}
Under Assumption \ref{H:CNA} another exponential inequality can be obtained by using Proposition  2.1 and Theorem  3.4 in \citet{pem:14}, which state that for a 1-Lipschitz function  $f:\{0,1\}^N\rightarrow\mathbb{R}$ we have
\begin{align}\label{eq:pem}
Pr\big(f(I_U)-\mathbb{E}[I_U]\geq \epsilon)\leq \exp\Big(-\frac{\epsilon^2}{8 n}\Big),\quad\forall \epsilon\geq 0.
\end{align}
To use this inequality in our context let $f_{\mathrm{HT}}:\{0,1\}^N\rightarrow\mathbb{R}$ be defined by
$$
f_{\mathrm{HT}}(z)=\Big(\sum_{k\in U}\check{y}_k^2\Big)^{-\frac{1}{2}}\sum_{k\in U} z_k\check{y}_k,\quad z\in\{0,1\}^N
$$
and note that, by the Cauchy-Schwartz inequality,   this function is 1-Lipschitz. Therefore, under   Assumption \ref{H:CNA} and using (\ref{eq:pem}), for all $\epsilon\geq 0$ we have
\begin{equation}\label{eq:pem2}
\begin{split}
 Pr(\hat{t}_{y\pi}-t_y \geq N \epsilon)&= Pr\bigg(f_{\mathrm{HT}}(I_U)-\mathbb{E}[f_{\mathrm{HT}}(I_U)] \geq \Big(\sum_{k\in U}\check{y}_k^2\Big)^{-\frac{1}{2}} N \epsilon\bigg)\\
 &\leq \exp\Big(-\frac{N^2\epsilon^2}{8 n \sum_{k\in U}\check{y}_k^2}\Big).
\end{split}
\end{equation}
Then, since $(\sup|\check{y}_k|)^2\leq  \sum_{k\in U}\check{y}_k^2 $, we conclude that the upper bound in \eqref{cor1:eq0} is never larger than the upper bound in \eqref{eq:pem2}, and that the two bounds are equal if and only if $\check{y}_k\neq 0$ for only one $k\in U$. Notice that the result of Theorem \ref{theo2} allows to replace, in \eqref{eq:pem2}, the Euclidean norm $\|\check{y}\|_2$ of the vector $\check{y}=(\check{y}_k,\,k\in U)$ by its maximum norm $\|\check{y}\|_\infty$, where we recall that $\|\check{y}\|_\infty\leq \|\check{y}\|_2\leq \sqrt{N}\|\check{y}\|_\infty$.

\subsection{Applications of Theorem \ref{theo2} \label{sec32}}

In this subsection, we consider Chao's procedure \citep{cha:82}, Till\'e's elimination procedure \citep{til:96} and the generalized Midzuno method \citep{mid:52,dev:til:98}, for which we show that Assumption \ref{H:CNA} is fulfilled (and hence that these sampling designs are CNA). We suppose that the inclusion probabilities $\pi_k$ are defined proportionally to some auxiliary variable $x_k >0$, known for any unit $k \in U$, as defined in equation \eqref{sec1:eq3}.

Chao's procedure \citep{cha:82} is particularly interesting if we wish to select a sample in a data stream, without having in advance a comprehensive list of the units in the population. The procedure is described in Algorithm \ref{chao:samp}, and belongs to the so-called family of reservoir procedures. A reservoir of size $n$ is maintained, and at any step of the algorithm the next unit is considered for possible selection. If the unit is selected, one unit is removed from the reservoir. The presentation in Algorithm \ref{chao:samp} is due to \cite{til:06}, and is somewhat simpler than the original algorithm.

\begin{algorithm}[htb!]
\begin{itemize}
\item Initialize with $t=n$, $\pi_{k}(n)=1$ for $k=1,\ldots,n$, and $S(n)=\{1,\ldots,n\}$.
\item For $t=n+1,\ldots,N$:
    \begin{itemize}
    \item Compute the inclusion probabilities proportional to $x_{k}$ in the population $U(t)=\{1,\ldots,t\}$, namely:
        \begin{eqnarray*}
          \pi_k(t) & = & n \frac{x_{k}}{\sum_{k=1}^t x_{l}}.
        \end{eqnarray*}
    If some probabilities exceed $1$, they are set to $1$ and the other inclusion probabilities are recomputed until all the probabilities are lower than $1$.
    \item Generate a random number $u_t$ according to a uniform distribution.
    \item If $u_t \leq \pi_{t}(t)$, remove one unit ($k$, say) from $S(t-1)$ with probabilities
        \begin{eqnarray*}
          p_{k}(t) & = & \frac{1}{\pi_{t}(t)} \left\{1-\frac{\pi_{k}(t)}{\pi_{k}(t-1)}\right\} \textrm{ for } k \in S(t-1).
        \end{eqnarray*}
    Take $S(t)=S(t-1) \cup \{t\} \setminus \{k\}$.
    \item Otherwise, take $S(t)=S(t-1)$.
    \end{itemize}
\end{itemize}
\caption{Chao's procedure} \label{chao:samp}
\end{algorithm}

 Till\'e's elimination procedure \citep{til:96} is described in Algorithm \ref{chao:samp}. This is a backward sampling algorithm proceeding into $N-n$ steps, and at each step one unit is eliminated from the population. The $n$ units remaining after Step $N-n$ constitute the final sample.

\begin{algorithm}[htb!]
\begin{itemize}
\item For $i=n,\ldots,N$, compute the probabilities
        \begin{eqnarray*}
          \pi_k(i) & = & i \frac{x_{k}}{\sum_{l \in U} x_{l}}
        \end{eqnarray*}
for any $k \in U$. If some probabilities exceed $1$, they are set to $1$ and the other inclusion probabilities are recomputed until all the probabilities are lower than $1$.
\item For $t=N-1,\ldots,n$, eliminate a unit $k$ from the population $U$ with probability
        \begin{eqnarray*}
          r_{k,i} & = & 1-\frac{\pi_k(i)}{\pi_k(i+1)}.
        \end{eqnarray*}
\end{itemize}
\caption{Till\'e's elimination procedure} \label{tille:samp}
\end{algorithm}

The Midzuno method \citep{mid:52} is a unequal probability sampling design which enables to estimate a ratio unbiasedly. Unfortunately, the algorithm can only be applied if the inclusion probabilities are such that
    \begin{eqnarray*}
      \pi_k & \geq & \frac{n-1}{N-1},
    \end{eqnarray*}
which is very stringent. The algorithm is generalized in \cite{dev:til:98} for an arbitrary set of inclusion probabilities, see Algorithm \ref{mid:samp}.

\begin{algorithm}[htb!]
\begin{itemize}
\item For $i=N-n,\ldots,N$, compute the probabilities
        \begin{eqnarray*}
          \pi_k(i) & = & i \frac{(1-\pi_{k})}{\sum_{l \in U} (1-\pi_{l})}
        \end{eqnarray*}
for any $k \in U$. If some probabilities exceed $1$, they are set to $1$ and the other inclusion probabilities are recomputed until all the probabilities are lower than $1$.
\item For $t=N-1,\ldots,N-n$, select a unit $k$ from the population $U$ with probability
        \begin{eqnarray*}
          p_{k,i} & = & 1-\frac{\pi_k(i)}{\pi_k(i+1)}.
        \end{eqnarray*}
\end{itemize}
\caption{Generalized Midzuno method} \label{mid:samp}
\end{algorithm}

\begin{theo} \label{prop1}
The conditional Sen-Yates-Grundy condition  \ref{H:CNA} is respected for Chao's procedure, Till\'e's elimination procedure and the Ge\-ne\-ra\-li\-zed Mid\-zu\-no me\-thod.
\end{theo}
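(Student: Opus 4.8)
The plan is to verify the conditional Sen-Yates-Grundy condition \ref{H:CNA} in its equivalent form \eqref{sec1:eq6}, namely that for distinct units $k,l,j_1,\dots,j_p$ one has $\pi_{k|j_1,\dots,j_p,l}\leq \pi_{k|j_1,\dots,j_p}$: forcing one extra unit into the sample must lower the conditional selection probability of every other unit. Since each of the three procedures decides the fate of exactly one unit per step (unit $t$ enters the reservoir in Chao's procedure, one unit is eliminated in Till\'e's procedure, one unit is definitively selected in the generalized Midzuno method), the natural strategy is an induction over the steps of the underlying algorithm, carrying along the conditional inclusion probabilities given a prescribed set $B=\{j_1,\dots,j_p\}$, and then $B\cup\{l\}$, of units forced to belong to the sample.

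For the two elimination-type procedures --- Till\'e's procedure (Algorithm \ref{tille:samp}) and the generalized Midzuno method (Algorithm \ref{mid:samp}), the latter selecting units by eliminating their complements --- the key lemma I would establish is a self-reducibility property: conditioning on a set $B$ of units being retained produces, for the remaining units, a sampling design of the same family, run on the sub-population $U\setminus B$ with the quota decreased by $|B|$ and with the same proportionality structure (to $x_k$ for Till\'e's procedure, to $1-\pi_k$ for Midzuno). Granting this, \eqref{sec1:eq6} reduces to comparing the first-order inclusion probability of $k$ in the reduced design on $U\setminus\{l\}$ to that in the design on $U$; for Till\'e's procedure this comparison collapses, in the untruncated regime, to the elementary inequality $(n-|B|)\,x_l\leq \sum_{j\in U\setminus B} x_j$, which is exactly the requirement that the level-$(n-|B|)$ inclusion probability of $l$ not exceed one and therefore holds by construction of $\pi$-ps sampling; the argument for Midzuno is identical after replacing $x_k$ by $1-\pi_k$. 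I would obtain the self-reducibility itself by induction on the number of steps, tracking how conditioning on $B$ reweights the one-step elimination probabilities $r_{k,i}$ (resp.\ the selection probabilities $p_{k,i}$) and verifying that the reweighted probabilities coincide with those of the same procedure on the reduced problem.

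Chao's reservoir method (Algorithm \ref{chao:samp}) requires a separate but parallel treatment because its dynamics are those of a Markov chain on the contents of the reservoir: at step $t$ the arriving unit $t$ either enters the reservoir, displacing one incumbent chosen with probabilities $p_k(t)$, or leaves the reservoir unchanged. Here I would induct on the incoming units $t=n+1,\dots,N$, showing that conditioning on a given unit being present in the current reservoir can only increase the chance that any other incumbent is subsequently displaced, so that the monotonicity \eqref{sec1:eq6} is preserved as the stream is processed. As before, the per-step inequalities reduce to the constraint that the recomputed probabilities $\pi_k(t)$ stay below one.

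The principal obstacle is establishing the self-reducibility under conditioning rigorously, and in particular handling the truncation step common to all three algorithms, where inclusion probabilities that would exceed one are reset to one and the remaining probabilities recomputed: truncation breaks the clean proportionality to $x_k$ (resp.\ to $1-\pi_k$) and must be absorbed into the inductive bookkeeping. A secondary difficulty is that Chao's Markov-chain dynamics do not fit the elimination template and need the independent displacement argument sketched above. Once \ref{H:CNA} is verified for each procedure, the CNA property of all three designs follows at once from the equivalence provided by the Feder-Mihail theorem \citep{fed:mih:92}.
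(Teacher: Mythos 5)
Your overall strategy---verifying the equivalent form \eqref{sec1:eq6} step by step along the run of each algorithm---is the right one and is broadly what the paper does, but the proposal leaves the decisive step unproved and, where it is made concrete, reduces to the wrong inequality. The central claim for Till\'e's procedure and the Midzuno method is the ``self-reducibility'' lemma: that conditioning on a set $B$ of units being retained yields the same procedure on $U\setminus B$ with quota $n-|B|$ and the same proportionality structure. This is not established, and it is not true as stated: conditioning on the survival of $B$ in the elimination scheme turns the step-$i$ elimination probabilities into $r_{k,i}/(1-\sum_{j\in B}r_{j,i})$, which is still an elimination design but is no longer of the form $1-\pi_k(i)/\pi_k(i+1)$ for a $\pi$-ps scheme on $U\setminus B$; in particular the conditional first-order probabilities are not proportional to $x_k$ there. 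Consequently the inequality you reduce to, $(n-|B|)\,x_l\leq\sum_{j\in U\setminus B}x_j$, is not the inequality that is actually needed. What \eqref{sec1:eq5} requires is the per-step factor inequality $(1-a-b-c)(1-a)\leq(1-a-b)(1-a-c)$ with $a=\sum_i r_{j_i,t}$, $b=r_{k,t}$, $c=r_{l,t}$, which is equivalent to $bc\geq 0$ and has nothing to do with the truncation constraint $\pi_k(i)\leq 1$. The paper's proof of Lemma \ref{lemma:Tille} is exactly this one-line product computation, with no conditioning lemma required.

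The Midzuno case is also mishandled: the generalized Midzuno method is a \emph{selection} (not elimination) procedure, so conditioning on $j_1,\dots,j_p\in S$ means conditioning on each of these units being drawn at some step, and the clean product formula for joint survival is unavailable; ``replacing $x_k$ by $1-\pi_k$'' does not transport the argument. The paper instead observes that Midzuno is the complementary design of Till\'e's procedure ($J_U=1-I_U$) and transfers the CNA property through the monotone transformation $\bar f(x)=1-f(1-x)$, as in \citet{esa:pro:wal:67}. Finally, for Chao's procedure your plan is only an intuition (``conditioning on presence increases displacement chances''); the paper carries out the induction explicitly using the recursions for $\pi_{k|j_1,\dots,j_p}(t)$ from Lemma 2 of \citet{cha:82}, splitting into the three cases $t\notin\{k,l\}\cup I$, $t=k$, and $t\in I$, and checking in each that the ratio $\pi_{kl|\cdot}(t)/\{\pi_{k|\cdot}(t)\,\pi_{l|\cdot}(t)\}$ is multiplied by a factor $\Delta(t)\leq 1$ of the same $(1-a-b-c)(1-a)/\{(1-a-b)(1-a-c)\}$ form. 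As written, your proposal does not yet contain a complete proof for any of the three procedures.
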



By  combining Theorems \ref{theo2} and \ref{prop1} we readily obtain the following result.
\begin{cor} \label{cor2}
Suppose that $p(\cdot)$ is Chao's procedure, Till\'e's elimination procedure or the Generalized Midzuno method. Then, the conclusions of Theorem \ref{theo2} hold.

\end{cor}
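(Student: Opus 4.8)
The plan is to verify the conditional Sen-Yates-Grundy condition in the equivalent form \eqref{sec1:eq6}, namely that for any distinct units $k,l,j_1,\ldots,j_p$ one has $\pi_{k|j_1,\ldots,j_p,l}\leq \pi_{k|j_1,\ldots,j_p}$; as explained after \eqref{sec1:eq6} this is equivalent to Assumption \ref{H:CNA}. Since all three algorithms are fixed-size designs that build the sample (or its complement) by resolving one unit at a time, I would treat each procedure by exploiting this sequential structure, reducing the computation of the relevant conditional probabilities to the one-step transition rules stated in Algorithms \ref{chao:samp}, \ref{tille:samp} and \ref{mid:samp}.

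The main engine would be an induction on the number of steps, realised through the Markov chain on nested subsets that each algorithm induces. For Till\'e's elimination procedure the state at stage $i$ is the surviving set $A_i$ of size $i$, with $A_N=U$ and $A_n=S$, and conditioning on units $j_1,\ldots,j_p$ (and on $l$) belonging to $S$ amounts to conditioning on their never being eliminated. I would express $\pi_{k|j_1,\ldots,j_p}$ by conditioning on the first transition $A_N\to A_{N-1}$, writing it as an average over the first eliminated unit of conditional inclusion probabilities for the chain continued from $A_{N-1}$, to which an induction hypothesis on the smaller number of remaining steps applies. Chao's reservoir procedure induces an analogous chain run forward in $t=n+1,\ldots,N$, and the generalized Midzuno method induces one of the same structural form with the role of $x_k$ played by $1-\pi_k$, as in Algorithm \ref{mid:samp}.

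The heart of the argument is the monotonicity step: additionally conditioning on a unit $l$ being in the sample forces $l$ out of the pool of freely competing units and redistributes its proportional mass among the others. Because at every stage the competing units are eliminated (respectively selected) with probabilities proportional to $x_k$, or to $1-\pi_k$ for Midzuno, on the current pool, this redistribution can only lower the conditional probability that $k$ ends up selected, which is precisely \eqref{sec1:eq6}. I would make this comparison quantitative at the level of a single transition and then propagate it through the induction above.

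The step I expect to be the main obstacle is the truncation rule common to all three algorithms, whereby any computed probability exceeding $1$ is reset to $1$ and the remaining probabilities are recomputed: this destroys the clean proportional form and forces one to track, separately under each conditioning event, which units are \emph{saturated} (fixed at probability $1$) at each stage. Ensuring that the inductive reduction and the one-step monotonicity survive this bookkeeping---and in particular that conditioning on $l\in S$ does not create saturation patterns that reverse the desired inequality---is where the careful work lies; a secondary technical point is to confirm that conditioning on the \emph{final} membership of several units is genuinely consistent with the forward/backward one-step recursion, which I would justify through the martingale (splitting) representation underlying each procedure.
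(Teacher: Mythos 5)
You have correctly identified that the corollary reduces to verifying Assumption \ref{H:CNA} for the three procedures and then invoking Theorem \ref{theo2}; this is exactly how the paper proceeds (the verification is Theorem \ref{prop1}, proved via Lemmas \ref{lemma:Cha0}--\ref{lemma:Midzuno}), and your stage-by-stage induction is the right general shape for Chao's procedure. However, there is a genuine gap at what you yourself call the heart of the argument: the claim that conditioning on $l\in S$ ``can only lower'' the conditional probability that $k$ is selected is asserted, not proved, and it is precisely the nontrivial content of \eqref{sec1:eq6}. The paper substantiates it by writing the ratio $\pi_{kl|j_1,\ldots,j_p}/(\pi_{k|j_1,\ldots,j_p}\,\pi_{l|j_1,\ldots,j_p})$ as an explicit product over the stages of per-step factors (using the recursion of Lemma 2 in \citet{cha:82} for Chao's procedure, and the product form of survival probabilities for Till\'e's procedure), each factor being of the form $(1-a-b-c)(1-a)/\{(1-a-b)(1-a-c)\}\le 1$. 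Without some such identity your induction has nothing quantitative to propagate. Relatedly, your worry about the truncation rule is largely misplaced: once the ratio is expressed through the one-step elimination/selection probabilities, the per-factor inequality holds for arbitrary nonnegative values of those probabilities, so no bookkeeping of saturated units is required.

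The second problem is your treatment of the generalized Midzuno method. You propose to run the same argument ``with the role of $x_k$ played by $1-\pi_k$,'' but Algorithm \ref{mid:samp} \emph{selects} units into the sample at each step, whereas Algorithm \ref{tille:samp} \emph{eliminates} them; conditioning on $j_1,\ldots,j_p\in S$ therefore means conditioning on these units being drawn at some stage, not on their surviving every stage, and the reduction to the Till\'e-type recursion does not go through as stated. The paper instead observes that the generalized Midzuno method is the complementary design of Till\'e's elimination procedure (i.e.\ $J_U=1-I_U$ follows Till\'e's procedure with inclusion probabilities $1-\pi_U$) and transfers the CNA property through the complement by the standard argument of \citet[][Section 4.1]{esa:pro:wal:67} applied to $\bar f(x)=1-f(1-x)$. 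You would need this complementation step, or an equivalent direct computation on the complement chain, to close that case.
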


\section{Brewer's method} \label{sec4}

 Brewer's method is a simple draw by draw procedure for unequal probability sampling, which can be applied with any set $\pi_U$ of inclusion probabilities which sums to an integer. It was first proposed for a sample of size $n=2$ \citep{bre:63}, and later generalized for any sample size \citep{bre:75}. It is presented in Algorithm \ref{bre:samp} as a particular case of the splitting method.

\begin{algorithm}[htb!]
\begin{enumerate}
    \item At Step $1$, we initialize with $U(1)=U$ and $M_1=N$.
        \begin{enumerate}
          \item We take
            \begin{eqnarray*}
              \alpha^k(1) & = & \frac{\frac{\pi_k(n-\pi_k)}{1-\pi_k}}{\sum_{l \in U(1)} \frac{\pi_l(n-\pi_l)}{1-\pi_l}} \textrm{ for any } k \in U(1).
            \end{eqnarray*}
          \item We draw the first unit $J_1$ with probabilities $\alpha^k(1)$ for $k \in U(1)$. The vector $\pi(1)$ is such that
            \begin{eqnarray*}
              \pi_k(1) & = & \left\{\begin{array}{ll}
                                      1 & \textrm{if } k=J_1, \\
                                      \frac{(n-1)\pi_k}{n-\pi_{J_1}} & \textrm{otherwise}.
                                    \end{array}
               \right.
            \end{eqnarray*}
        \end{enumerate}
    \item At Step $t=2,\ldots,n$, we take $U(t)=U \setminus\{J_1,\ldots,J_{t-1}\}$ and $M_t=N-t+1$.
        \begin{enumerate}
          \item We take
            \begin{eqnarray*}
              \alpha^k(t) & = & \frac{\frac{\pi_k(t-1)\{n-t+1-\pi_k(t-1)\}}{1-\pi_k(t-1)}}{\sum_{l \in U(t)} \frac{\pi_l(t-1)\{n-t+1-\pi_l(t-1)\}}{1-\pi_l(t-1)}} \textrm{ for any } k \in U(t).
            \end{eqnarray*}
          \item We draw the $t$-th unit $J_t$ with probabilities $\alpha^k(t)$ for $k \in U(t)$. The vector $\pi(t)$ is such that
            \begin{eqnarray*}
              \pi_k(t) & = & \left\{\begin{array}{ll}
                                      1 & \textrm{if } k \in \{J_1,\ldots,J_t\}, \\
                                      \frac{(n-t)\pi_k(t-1)}{n-t+1-\pi_{J_t}(t-1)} & \textrm{otherwise}.
                                    \end{array}
               \right.
            \end{eqnarray*}
        \end{enumerate}
    \item The algorithm stops at step $T=n$ when all the components of $\pi(n)$ are $0$ or $1$. We take $I_U=\pi(n)$.
 \end{enumerate}
\caption{Brewer's method} \label{bre:samp}
\end{algorithm}

 This is not obvious whether Brewer's method satisfies condition \ref{H:CNA}. In particular, the inclusion probabilities of second (or superior) order have no explicit formulation, and may only be computed by means of the complete probability tree. However, as shown in the following result, the conclusions of Theorem \ref{theo2} derived for CNA sampling designs also hold for   Brewer's method.



\begin{theo} \label{theo3}
Suppose that $p(\cdot)$ is Brewer's procedure. Then
    \begin{eqnarray*}
      Pr(\hat{t}_{y\pi}-t_y \geq N \epsilon) & \leq & \exp\left(-\frac{N^2 \epsilon^2}{8 n \{\sup|\check{y}_k|\}^2 } \right),\quad\forall\epsilon\geq 0
    \end{eqnarray*}
If in addition Assumptions \ref{H:Bound}-\ref{H:pi} hold, then
    \begin{eqnarray*}
     Pr(\hat{t}_{y\pi}-t_y \geq N \epsilon) & \leq & \exp\left(-\frac{nc^2 \epsilon^2}{8 M^2} \right),\quad\forall\epsilon\geq 0.
    \end{eqnarray*}
\end{theo}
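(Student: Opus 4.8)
The plan is to realize Brewer's method as an instance of the general splitting method and then apply Theorem \ref{theo1} with the same uniform constant $C=2$ used for the CNA designs in Theorem \ref{theo2}. Algorithm \ref{bre:samp} already casts Brewer's procedure in the form of Algorithm \ref{algo1} with $T=n$ steps, the split direction indexed by $i=k$ at step $t$ corresponding to drawing $J_t=k$ with probability $\alpha^k(t)$. Hence it suffices to produce constants $a_t(n,N)$ with $Pr\big(\sum_{k\in U(t)}|\delta_k(t)|\leq a_t(n,N)\big)=1$ and to check that $a_t(n,N)\leq 2$ for every $t=1,\dots,n$; the first inequality then follows at once from \eqref{theo1:eq3} with $C=2$, whose denominator becomes $2\{\sup|\check y_k|\}^2\,n\,2^2=8n\{\sup|\check y_k|\}^2$. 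Along the way one should briefly verify that the displayed weights $\alpha^k(t)$ and update rule genuinely define a splitting step, i.e.\ that \eqref{algo1:eq1}--\eqref{algo1:eq3} hold and that $\delta(t)$ is a martingale increment, which is routine given their explicit form.

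The core computation is the evaluation of $\sum_{k\in U(t)}|\delta_k(t)|$. Fixing a step $t$, conditioning on $\mathcal{F}_{t-1}$, and writing $J_t=j$ for the drawn unit (necessarily with $\pi_j(t-1)\in(0,1)$ since $\alpha^j(t)>0$), the update rule in Algorithm \ref{bre:samp} gives $\delta_j(t)=1-\pi_j(t-1)$ for the selected unit and, for $k\in U(t)\setminus\{j\}$,
$$
\delta_k(t)=\pi_k(t-1)\Big(\frac{n-t}{n-t+1-\pi_j(t-1)}-1\Big)=-\,\pi_k(t-1)\,\frac{1-\pi_j(t-1)}{n-t+1-\pi_j(t-1)},
$$
with $\delta_k(t)=0$ otherwise. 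Thus $\delta_j(t)\geq 0$ while $\delta_k(t)\leq 0$ for $k\neq j$, so there is no sign ambiguity in the sum of absolute values.

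The step that makes everything telescope is the elementary identity $\sum_{k\in U(t)}\pi_k(t-1)=n-(t-1)$. It holds because the $t-1$ units drawn at earlier steps have probability $1$, and because the construction preserves the total $\sum_{k\in U}\pi_k(t)=n$ along every path (indeed, one reads off from the increments above that $\sum_k\delta_k(t)=0$ pathwise, not merely in expectation). Consequently $\sum_{k\in U(t)\setminus\{j\}}\pi_k(t-1)=n-t+1-\pi_j(t-1)$, which is exactly the denominator appearing in $\delta_k(t)$. Summing the absolute increments therefore yields
$$
\sum_{k\in U(t)}|\delta_k(t)|=\big(1-\pi_j(t-1)\big)+\frac{1-\pi_j(t-1)}{n-t+1-\pi_j(t-1)}\sum_{k\in U(t)\setminus\{j\}}\pi_k(t-1)=2\big(1-\pi_j(t-1)\big)\leq 2 .
$$
Since this holds for every admissible value of $J_t$, we may take $a_t(n,N)=2$ for all $t$, i.e.\ $C=2$, establishing the first inequality. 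For the second, Assumptions \ref{H:Bound}--\ref{H:pi} give $|\check y_k|=|y_k|/\pi_k\leq M/(cn/N)=MN/(cn)$, so substituting $\sup|\check y_k|\leq MN/(cn)$ into the first bound collapses the denominator to $8nM^2/(c^2\epsilon^2)^{-1}$, i.e.\ produces the factor $\tfrac{nc^2\epsilon^2}{8M^2}$, as claimed.

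I expect the only real obstacle to be this exact evaluation of $\sum_{k\in U(t)}|\delta_k(t)|$: one must correctly track the signs of the increments (positive for the drawn unit, negative for the rest) and recognise the cancellation forced by $\sum_{k\in U(t)}\pi_k(t-1)=n-(t-1)$, which is what turns an a priori messy sum into the clean bound $2(1-\pi_j(t-1))\leq 2$. Once this pathwise bound is in hand, the theorem is an immediate corollary of Theorem \ref{theo1}, and no second-order inclusion probabilities are ever needed --- which is precisely why the argument succeeds for Brewer's method even though the CNA property \ref{H:CNA} is not known to hold for it.
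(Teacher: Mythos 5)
Your proposal is correct and follows essentially the same route as the paper: cast Algorithm \ref{bre:samp} as an instance of Algorithm \ref{algo1} with $T=n$, compute the increments explicitly, use $\sum_{k\in U(t)}\pi_k(t-1)=n-t+1$ to obtain $\sum_{k\in U(t)}|\delta_k(t)|=2\{1-\pi_{J_t}(t-1)\}\leq 2$, and invoke Theorem \ref{theo1} with $C=2$. The only blemish is the garbled expression ``$8nM^2/(c^2\epsilon^2)^{-1}$'' in the final substitution, but the stated conclusion $nc^2\epsilon^2/(8M^2)$ is the correct one.
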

 Remark that this results shows that, for Brewer's procedure, equation \eqref{theo1:eq3} holds for $C=2$.

\section{Conclusion}\label{sec:conc}

In this paper, we have focused on fixed-size sampling designs, which may be represented by the splitting method in $T=n$ steps. Under such representation, we have shown that it is sufficient to prove that the constants $a_t(n,N)$ in Theorem \ref{theo1} are bounded above, to obtain an exponential inequality with the usual order in $n$.

Other sampling designs like the cube method \citep{dev:til:04} are more easily implemented through a sequential sampling algorithm, leading to a representation by the splitting method in $T=N$ steps. In such case, we need an upper bound of order $\sqrt{n/N}$ for the constants $a_t(n,N)$ to obtain an exponential inequality with the usual order. This is more difficult to establish. Alternatively, we may try to group the $N$ steps to obtain an alternative representation by means of the splitting method in $n$ steps, in such a way that the constants $a_t(n,N)$ are bounded above. This is an interesting matter for further research.

\bibliographystyle{apalike}

\newpage

\appendix

\section{Proofs}

\subsection{A universal representation by means of the splitting method} \label{app0}

\begin{lemma} \label{lem0}
  Any sampling design $p(\cdot)$ may be represented by means of the splitting Algorithm \ref{algo1}.
\end{lemma}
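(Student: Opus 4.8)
The plan is to construct an explicit instance of the general splitting method of Algorithm \ref{algo1} that reproduces a prescribed design $p(\cdot)$, processing the $N$ units one at a time so that the construction terminates in $T=N$ steps (the number of steps referred to in the main text). Order the units as $U=\{1,\dots,N\}$ and, for a partial assignment $z_{1:t-1}=(z_1,\dots,z_{t-1})\in\{0,1\}^{t-1}$ having positive probability under $p(\cdot)$, write
\[
\pi_k^{(t-1)}(z_{1:t-1})=Pr\big(k\in S\mid I_1=z_1,\dots,I_{t-1}=z_{t-1}\big),\qquad k\in U,
\]
for the conditional inclusion probability of unit $k$ given the first $t-1$ decisions, so that $\pi_k^{(0)}=\pi_k$ and $\pi_j^{(t-1)}=z_j$ for $j<t$. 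The invariant I would maintain is that the vector $\pi(t-1)$ produced by the algorithm equals $(\pi_k^{(t-1)}(z_{1:t-1}))_{k\in U}$; in particular its first $t-1$ components are already $0$ or $1$ and encode the decisions taken so far.

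First I would define the step-$t$ split. If $\pi_t^{(t-1)}\in\{0,1\}$ there is nothing to decide and I take $M_t=1$ with $\delta^1(t)=0$. Otherwise I set $M_t=2$ and define increments $\delta^{\mathrm{in}}(t)$ and $\delta^{\mathrm{out}}(t)$ that leave the already-fixed components $1,\dots,t-1$ unchanged, set component $t$ to $1$ (resp.\ $0$), and update each component $k>t$ to the new conditional inclusion probability $\pi_k^{(t)}(z_{1:t-1},1)$ (resp.\ $\pi_k^{(t)}(z_{1:t-1},0)$). I take weights $\alpha^{\mathrm{in}}(t)=\pi_t^{(t-1)}$ and $\alpha^{\mathrm{out}}(t)=1-\pi_t^{(t-1)}$, so that \eqref{algo1:eq1} holds; condition \eqref{algo1:eq3} holds because every updated entry is itself a conditional probability, hence lies in $[0,1]$.

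The key step is to verify the centering condition \eqref{algo1:eq2}, which is where the structure of $p(\cdot)$ enters. For component $t$ it reduces to $\alpha^{\mathrm{in}}(t)(1-\pi_t^{(t-1)})+\alpha^{\mathrm{out}}(t)(0-\pi_t^{(t-1)})=0$, immediate from the choice of weights; for each component $k>t$ it is exactly the law of total probability, namely
\[
\pi_k^{(t-1)}(z_{1:t-1})=\pi_t^{(t-1)}\,\pi_k^{(t)}(z_{1:t-1},1)+\big(1-\pi_t^{(t-1)}\big)\,\pi_k^{(t)}(z_{1:t-1},0),
\]
so the $\alpha$-weighted sum of the two increments vanishes on every coordinate. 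With the three conditions in place, a straightforward induction on $t$, using the chain rule $\prod_{j\le t}\alpha^{z_j}(j)=Pr(I_1=z_1,\dots,I_t=z_t)$, shows that after step $t$ the components $1,\dots,t$ of $\pi(t)$ are frozen at values $z_1,\dots,z_t$ drawn with the correct joint law, while the remaining components carry the corresponding conditional inclusion probabilities. At $t=N$ all components are $0$ or $1$ and $I_U=\pi(N)$ has distribution $p(\cdot)$, as required.

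The main obstacle is bookkeeping rather than mathematical depth: one must track the conditional inclusion probabilities correctly and absorb the degenerate case in which some $\pi_t^{(t-1)}$ already equals $0$ or $1$ (handled above by $M_t=1$), while partial assignments of probability zero never occur along the sampled path and may be ignored. I would also remark that the bare existence statement of the lemma admits an even shorter proof: a single step with $M_1$ equal to the number of subsets $s$ with $p(s)>0$, weights $\alpha^s(1)=p(s)$ and increments $\delta^s(1)=\mathbf{1}_s-\pi_U$, satisfies \eqref{algo1:eq1}--\eqref{algo1:eq3} at once, with \eqref{algo1:eq2} holding because $\sum_s p(s)\mathbf{1}_s=E_p(I_U)=\pi_U$, and terminates at $T=1$ with $I_U=\mathbf{1}_s$ of probability $p(s)$; the sequential version is preferable only because it realises the design in the $T=N$ steps promised in the main text.
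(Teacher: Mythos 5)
Your construction is exactly the paper's: the sequential (Doob-martingale) realisation with $T=N$, $M_t=2$, weights equal to the conditional selection probability of unit $t$ given $I_1,\dots,I_{t-1}$, and increments updating the remaining coordinates to their new conditional inclusion probabilities; you merely spell out the verification of \eqref{algo1:eq2} via the law of total probability, which the paper leaves implicit. Correct, and essentially the same proof.
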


\begin{proof}
A sampling design $p(\cdot)$ can always be implemented by means of a sequential procedure. At step $t=1$, the unit $1$ is selected with probability $\pi_1$, and $I_1$ is the sample membership indicator for unit $1$. At steps $t=2,\ldots,N$, the unit $t$ is selected with probability
    \begin{eqnarray*}
      Pr(t \in S | I_1,\ldots,I_{t-1}),
    \end{eqnarray*}
and $I_t$ is the sample membership indicator for unit $t$. This corresponds to the Doob martingale associated with the filtration $\sigma(I_1,\ldots,I_t)$. \\

This procedure is a particular case of the splitting Algorithm \ref{algo1}, where $T=N$; $M_t=2$ for all $t=1,\ldots,N$; $\alpha^1(t)=Pr(t \in S | I_1,\ldots,I_{t-1})$ and $\delta^1(t)$ is such that
    \begin{eqnarray*}
      \delta_l^1(t) & = & \left\{\begin{array}{ll}
                                   0 & \textrm{if } l<t, \\
                                   1-Pr(t \in S | I_1,\ldots,I_{t-1}) & \textrm{if } l=t, \\
                                   Pr(l \in S | I_1,\ldots,I_{t-1},I_t=1)-Pr(l \in S | I_1,\ldots,I_{t-1}) & \textrm{if } l >t,
                                 \end{array} \right.
    \end{eqnarray*}
and where $\alpha^2(t)=1-Pr(t \in S | I_1,\ldots,I_{t-1})$ and $\delta^2(t)$ is such that
    \begin{eqnarray*}
      \delta_l^2(t) & = & \left\{\begin{array}{ll}
                                   0 & \textrm{if } l<t, \\
                                   -Pr(t \in S | I_1,\ldots,I_{t-1}) & \textrm{if } l=t, \\
                                   Pr(l \in S | I_1,\ldots,I_{t-1},I_t=0)-Pr(l \in S | I_1,\ldots,I_{t-1}) & \textrm{if } l >t.
                                 \end{array}  \right.
    \end{eqnarray*}
\end{proof}

\subsection{Proof of Theorem \ref{theo2}} \label{app1}

\subsubsection{Preliminary results} \label{app11}

\begin{lemma} \label{lem1}
  A fixed-size sampling design $p(\cdot)$ may be obtained by means of the draw by draw sampling Algorithm \ref{algo2}.
\end{lemma}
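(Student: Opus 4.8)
The plan is to exhibit the draw-by-draw procedure of Algorithm \ref{algo2} explicitly and then verify that it is a particular instance of the splitting method in $T=n$ steps. Since $p(\cdot)$ is a distribution on \emph{unordered} samples of size $n$, I would first turn it into a distribution on \emph{ordered} $n$-tuples of distinct units by setting $p^*(k_1,\dots,k_n)=p(\{k_1,\dots,k_n\})/n!$; as each sample $s$ of size $n$ admits exactly $n!$ orderings, $p^*$ is a genuine probability distribution whose image under the ``forget the order'' map is $p(\cdot)$. The draw-by-draw procedure then selects $J_1,\dots,J_n$ one at a time from the conditional laws of $p^*$: at step $t$, given $J_1=k_1,\dots,J_{t-1}=k_{t-1}$, a not-yet-selected unit $J_t=k$ is drawn with the conditional probability obtained by marginalising $p^*$ over its last $n-t$ coordinates. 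By construction the resulting set $S=\{J_1,\dots,J_n\}$ has law $p(\cdot)$, which settles the first half of the claim.

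Second I would show that this procedure is a splitting method. Let $\mathcal{F}_t=\sigma(J_1,\dots,J_t)$ with $\mathcal{F}_0$ trivial, and define the vector $\pi(t)$ component-wise by $\pi_k(t)=Pr(k\in S\mid\mathcal{F}_t)=E[I_k\mid\mathcal{F}_t]$. This is the vector-valued Doob martingale of $I_U$, so that $\pi(0)=\pi_U$, $\pi(n)=E[I_U\mid\mathcal{F}_n]=I_U$ (the sample being determined after $n$ draws), and the increments $\delta(t)=\pi(t)-\pi(t-1)$ are martingale increments with respect to $\{\mathcal{F}_t\}$. To match Algorithm \ref{algo1}, at step $t$ I would enumerate the $M_t$ admissible values $k$ of $J_t$ (those of positive conditional probability), and set $\alpha^k(t)=Pr(J_t=k\mid\mathcal{F}_{t-1})$ together with $\delta^k(t)=\pi(t)-\pi(t-1)$ on the event $\{J_t=k\}$. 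Then $\sum_k\alpha^k(t)=1$ is the total probability of the next draw and $\sum_k\alpha^k(t)\delta^k(t)=E[\delta(t)\mid\mathcal{F}_{t-1}]=0$ is the martingale property, which give \eqref{algo1:eq1}--\eqref{algo1:eq2}; finally \eqref{algo1:eq3} holds because $\pi(t-1)+\delta^k(t)=\pi(t)$ and every component of $\pi(t)$, being a conditional inclusion probability, lies in $[0,1]$.

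The argument is mostly bookkeeping, and I expect the only genuinely delicate points to be the following. First, one must check that the conditional draw probabilities are well defined, i.e.\ that one only ever conditions on histories $(k_1,\dots,k_{t-1})$ of positive probability; this holds automatically along the realised trajectory, and the assumption $\pi_k>0$ guarantees no unit is a priori excluded. Second, one must confirm that exactly $n$ steps suffice, i.e.\ that all components of $\pi(n)$ lie in $\{0,1\}$: after $n$ distinct draws the drawn units have conditional inclusion probability $1$ and, the sample being fully determined, the remaining units have conditional inclusion probability $0$. With these two verifications the identification of the draw-by-draw procedure with the splitting method in $T=n$ steps is complete.
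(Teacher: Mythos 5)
Your overall strategy is sound and is essentially the mirror image of the paper's argument, but as written it has a gap at the one point that is the actual content of the lemma. You construct the symmetrized distribution $p^*(k_1,\dots,k_n)=p(\{k_1,\dots,k_n\})/n!$ and draw $J_1,\dots,J_n$ sequentially from the conditional laws of $p^*$; it is indeed immediate that the unordered sample then has law $p(\cdot)$. However, the lemma does not assert that \emph{some} draw-by-draw scheme realizes $p(\cdot)$ --- it asserts that \emph{Algorithm \ref{algo2}} does, and Algorithm \ref{algo2} prescribes the explicit draw probabilities $p_{k,t}=\pi_{k|J_1,\ldots,J_{t-1}}/(n-t+1)$. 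You never verify that the conditional law of $p^*$ at step $t$ coincides with this formula, and without that identification your construction proves a different (weaker) statement.

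The missing step is short but indispensable. Marginalizing $p^*$ over its last $n-t$ coordinates gives, for the event $\{J_1=k_1,\dots,J_t=k_t\}$, the probability $\frac{(n-t)!}{n!}\,\pi_{k_1,\ldots,k_t}$, since exactly $(n-t)!$ orderings of each sample $s\supset\{k_1,\dots,k_t\}$ contribute and $\sum_{s\supset\{k_1,\dots,k_t\}}p(s)=\pi_{k_1,\ldots,k_t}$ for a fixed-size design. Taking the ratio of consecutive marginals yields
\begin{equation*}
Pr(J_t=k_t\mid J_1=k_1,\dots,J_{t-1}=k_{t-1})=\frac{(n-t)!}{(n-t+1)!}\cdot\frac{\pi_{k_1,\ldots,k_t}}{\pi_{k_1,\ldots,k_{t-1}}}=\frac{\pi_{k_t|k_1,\ldots,k_{t-1}}}{n-t+1},
\end{equation*}
which is exactly $p_{k_t,t}$. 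This computation is the paper's telescoping identity $p_{j_{\sigma(1)},1}\cdots p_{j_{\sigma(n)},n}=\pi_{j_1,\ldots,j_n}/n!$ read in the opposite direction; once you include it, your proof is complete and equivalent to the paper's. Note also that your entire second part (matching the procedure to the splitting method via the Doob martingale) addresses Lemma \ref{lem2} of the paper rather than Lemma \ref{lem1}; it is correct but not needed for the statement at hand.
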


\begin{algorithm}[htb!]
\begin{enumerate}
    \item At Step $t=1$, we initialize with $U(1)=U$ and
        \begin{eqnarray} \label{algo2:eq1}
          p_{k,1} & = & \frac{\pi_k}{n} \textrm{ for any } k \in U(1).
        \end{eqnarray}
    A first unit $J_1$ is selected in $U(1)$ with probabilities $p_{k,1}$.
    \item At Step $t>1$, we take $U(t)=U \setminus \{J_1,\ldots,J_{t-1}\}$ and
        \begin{eqnarray} \label{algo2:eq2}
          p_{k,t} & = & \frac{\pi_{k|J_1,\ldots,J_{t-1}}}{n-t+1} \textrm{ for any } k \in U(t).
        \end{eqnarray}
    A unit $J_t$ is selected in $U(t)$ with probabilities $p_{k,t}$.
    \item The algorithm stops at time $t=n$, and the sample is $S=\{J_1,\ldots,J_n\}$.
 \end{enumerate}
\caption{Draw by draw sampling algorithm for a fixed-size sampling design} \label{algo2}
\end{algorithm}

\begin{proof}
We note $\Sigma_n$ for the set of permutations of size $n$, and $\sigma$ for a particular permutation. For any subset $s=\{j_1,\ldots,j_n\} \subset U$ of size $n$, we have
    \begin{eqnarray*}
      Pr(S = s) & = & \sum_{\sigma \in \Sigma_n} Pr(J_1 = j_{\sigma(1)}, \ldots, J_n = j_{\sigma(n)}) \nonumber \\
                & = & \sum_{\sigma \in \Sigma_n} p_{j_{\sigma(1)},1} \times \cdots \times p_{j_{\sigma(n)},n} \nonumber \\
                & = & \sum_{\sigma \in \Sigma_n} \frac{\pi_{j_{\sigma(1)}}\pi_{j_{\sigma(2)}|j_{\sigma(1)}} \cdots \pi_{j_{\sigma(n)}|j_{\sigma(1)},\ldots,j_{\sigma(n-1)}}}{n!} \nonumber \\
                & = & \sum_{\sigma \in \Sigma_n} \frac{\pi_{j_{\sigma(1)},\ldots,j_{\sigma(n)}}}{n!} = \sum_{\sigma \in \Sigma_n} \frac{\pi_{j_{1},\ldots,j_{n}}}{n!} = p(s).
    \end{eqnarray*}
\end{proof}

\noindent \textbf{Remark} Algorithm \ref{algo2} is not helpful in practice to select a sample by means of the sampling design under study. This algorithm requires to determine the conditional inclusion probabilities up to any order, which are usually very difficult to compute. 

\begin{lemma} \label{lem2}
Algorithm \ref{algo2} is a particular case of Algorithm \ref{algo1} where $T=n$,  $M_t=N-t+1$ for all   $t=1,\ldots,n$,   and where, for all $t=1,\dots n$ and $i=1,\dots, M_t$,   $\alpha^i(t)=p_{i,t}$ with $p_{i,t}$ as defined in  \eqref{algo2:eq1}-\eqref{algo2:eq2} while $\delta^i(t)$ is such that
    \begin{eqnarray} \label{app1:eq1}
      \delta_l^i(t) & = & \left\{\begin{array}{ll}
                                   0 & \textrm{if } l \in \{J_1,\ldots,J_{t-1}\}, \\
                                   1-\pi_{i|J_1,\ldots,J_{t-1}} & \textrm{if } l=i, \\
                                   -(\pi_{l|J_1,\ldots,J_{t-1}}-\pi_{l|J_1,\ldots,J_{t-1},i}) & \textrm{if } l \in U(t) \setminus \{i\}.
                                 \end{array}
       \right.
    \end{eqnarray}
\end{lemma}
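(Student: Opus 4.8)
The plan is to verify that the quantities $\alpha^i(t)$ and $\delta^i(t)$ displayed in the statement satisfy the three defining requirements \eqref{algo1:eq1}--\eqref{algo1:eq3} of the splitting method, and that the resulting selection mechanism reproduces Algorithm \ref{algo2}. I would organize the argument around the claim that, conditionally on the units $J_1,\dots,J_{t-1}$ drawn in the first $t-1$ steps, the vector $\pi(t-1)$ produced by Algorithm \ref{algo1} is the vector of conditional inclusion probabilities, i.e.\ $\pi_k(t-1)=1$ for $k\in\{J_1,\dots,J_{t-1}\}$ and $\pi_k(t-1)=\pi_{k|J_1,\dots,J_{t-1}}$ for $k\in U(t)$. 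This claim is established by induction on $t$: substituting \eqref{app1:eq1} into $\pi(t)=\pi(t-1)+\delta^i(t)$ when $J_t=i$ gives $\pi_i(t)=1$, $\pi_k(t)=1$ for the previously drawn units, and $\pi_k(t)=\pi_{k|J_1,\dots,J_{t-1},i}=\pi_{k|J_1,\dots,J_t}$ for $k\in U(t)\setminus\{i\}$, which advances the induction and in passing yields \eqref{algo1:eq3}, since every component of $\pi(t)$ is either $1$ or a genuine conditional inclusion probability, hence lies in $[0,1]$.

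It remains to check \eqref{algo1:eq1} and \eqref{algo1:eq2}. For \eqref{algo1:eq1} I would use that the design has fixed size $n$: conditionally on $J_1,\dots,J_{t-1}\in S$ there remain $n-t+1$ units to be drawn from $U(t)$, so $\sum_{i\in U(t)}\pi_{i|J_1,\dots,J_{t-1}}=n-t+1$ and therefore $\sum_{i\in U(t)}\alpha^i(t)=\sum_{i\in U(t)}p_{i,t}=1$. For \eqref{algo1:eq2} I would verify $\sum_{i\in U(t)}\alpha^i(t)\delta_l^i(t)=0$ component by component. The components $l\in\{J_1,\dots,J_{t-1}\}$ are trivial, since $\delta_l^i(t)=0$ there. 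For $l\in U(t)$, writing $\pi_i\equiv\pi_{i|J_1,\dots,J_{t-1}}$ and $\pi_{l|i}\equiv\pi_{l|J_1,\dots,J_{t-1},i}$ and separating the $i=l$ term from the rest, the sum reduces, after using $\sum_{i\neq l}\pi_i=(n-t+1)-\pi_l$, to
\[
\frac{1}{n-t+1}\Big[\pi_l-\pi_l(n-t+1)+\sum_{i\in U(t),\,i\neq l}\pi_i\pi_{l|i}\Big].
\]

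The main obstacle is the evaluation of the last sum. Here I would use the identity $\pi_i\pi_{l|i}=\pi_{il|J_1,\dots,J_{t-1}}$ together with the fixed-size property applied a second time, now to the design conditioned on $J_1,\dots,J_{t-1},l\in S$: conditionally on these $t$ units being selected, the remaining $n-t$ units are drawn from $U(t)\setminus\{l\}$, so $\sum_{i\in U(t),\,i\neq l}\pi_{i|J_1,\dots,J_{t-1},l}=n-t$, whence $\sum_{i\in U(t),\,i\neq l}\pi_i\pi_{l|i}=\pi_l\sum_{i\neq l}\pi_{i|J_1,\dots,J_{t-1},l}=(n-t)\pi_l$. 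Substituting this into the bracket gives $\pi_l\bigl[1-(n-t+1)+(n-t)\bigr]=0$, which is \eqref{algo1:eq2}.

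Finally, I would observe that after $n$ steps the $n$ distinct drawn units satisfy $\pi_k(n)=1$, while every other unit has $\pi_k(n)=\pi_{k|J_1,\dots,J_n}=0$ by the fixed-size property, so the algorithm stops at $T=n$ with $I_U=\pi(n)$ equal to the indicator vector of $\{J_1,\dots,J_n\}$. Since the per-step selection law $\alpha^i(t)=p_{i,t}$ coincides with the draw probabilities of Algorithm \ref{algo2} and, by the induction above, each draw updates $\pi(t)$ consistently with the conditional inclusion probabilities, the two algorithms generate the same distribution over samples, completing the identification.
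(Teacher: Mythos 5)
Your proof is correct and follows the same route as the paper, which simply declares the lemma ``a direct consequence of Lemma \ref{lem1} and of the definitions of Algorithms \ref{algo1} and \ref{algo2}''; you have supplied the verification the paper leaves implicit, and your two uses of the fixed-size identities $\sum_{i\in U(t)}\pi_{i|J_1,\dots,J_{t-1}}=n-t+1$ and $\sum_{i\neq l}\pi_{i|J_1,\dots,J_{t-1},l}=n-t$ are exactly the ingredients the paper itself reuses later in \eqref{app1:eq2}. All three conditions \eqref{algo1:eq1}--\eqref{algo1:eq3} are checked correctly and the inductive identification of $\pi(t)$ with the conditional inclusion probabilities is sound.
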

\begin{proof}
The lemma is a direct consequence of Lemma \ref{lem1} and of the definitions of Algorithms \ref{algo1} and \ref{algo2}.
\end{proof}

\subsubsection{Proof of the theorem} \label{app12}

\begin{proof}
 By Theorem \ref{theo1} and Lemma \ref{lem2}, to prove  Theorem \ref{theo2}, it is therefore sufficient to prove that
\begin{align}\label{eq:ToSHow}
\sum_{l \in U(t)} |\delta_l^i(t)|\leq 2,\quad\forall i\in\{1,\dots, M_t\},\quad\forall t\in\{1,\dots,n\}
\end{align}
where $M_t$  and $\big\{\{\delta_l^i\}_{i=1}^{M_t};\,t=1,\dots,n\}$ are as in Lemma \ref{lem2}.

Under Assumption \ref{H:CNA}, for any $t=1,\ldots,n$ and  y $i=1,\ldots,M_t$ we have
    \begin{eqnarray} \label{app1:eq2}
      \sum_{l \in U(t)} |\delta_l^i(t)| & = & \delta_i^i(t) + \sum_{l \in U(t) \setminus \{i\}} (\pi_{l|J_1,\ldots,J_{t-1}}-\pi_{l|J_1,\ldots,J_{t-1},i}) \\
                                        & = & \left\{1-\pi_{i|J_1,\ldots,J_{t-1}}\right\}+\left\{(n-t+1-\pi_{i|J_1,\ldots,J_{t-1}})-(n-t)\right\} \nonumber \\
                                        & = & 2 \left\{1-\pi_{i|J_1,\ldots,J_{t-1}}\right\}, \nonumber
    \end{eqnarray}
and where the second line in \eqref{app1:eq2} follows from the identities
    \begin{eqnarray*}
      \sum_{l \in U(t)} \pi_{l|J_1,\ldots,J_{t-1}}  =  n-(t-1), \quad \sum_{l \in U(t) \setminus \{i\}}  \pi_{l|J_1,\ldots,J_{t-1},i} c =  n-t.
    \end{eqnarray*}
This shows \eqref{eq:ToSHow} and the proof is complete.
\end{proof}

\subsection{Proof of Proposition \ref{prop0}}
\begin{proof}

Let $v^2=\{\sup |y_k|\}^2$ and note that
\begin{equation*}
\begin{split}
\exp\left(-\frac{ n \epsilon^2   }{8 v^2 } \right) \leq 2 \exp\bigg( -\frac{ n \epsilon^2 }{8(1-n/N)  v^2+     (4/3)\epsilon    v}\bigg)\Leftrightarrow f_n(\epsilon)\leq 0
\end{split}
\end{equation*}
 where, for every $x\geq 0$,
$$
f(x)=-\Big(\frac{4}{3} n v\Big)x^3+\Big(8n\frac{n}{N}v^2\Big) x^2-\Big(\frac{32}{3}\log(2) v^3\Big)x-64\Big(1-\frac{n}{N}\Big)v^4\log(2).
$$
A sufficient condition to have $f(\epsilon)\leq 0$ is that
\begin{align*}
 -\Big(\frac{4}{3} n v\Big)&\epsilon^3+\Big(8n\frac{n}{N}v^2\Big)\epsilon^2-\Big(\frac{32}{3}\log(2) v^3\Big)\epsilon\leq 0\Leftrightarrow g(\epsilon)\leq 0
\end{align*}
 where, for every $x\geq 0$,
$$
g(x)=-\Big(\frac{4}{3} n v\Big)x^2+\Big(8n\frac{n}{N}v^2\Big)x -\Big(\frac{32}{3}\log(2) v^3\Big).
$$
Notice that $g(0)< 0$ and that the equation has a solution $g(x)=0$ has a (real) solution  if and only if
\begin{align}\label{eq:equiv}
\Big(8n\frac{n}{N}v^2\Big)^2&-4\Big(\frac{4}{3} n v\Big)\Big(\frac{32}{3}\log(2) v^3\Big)\geq 0 \Leftrightarrow  n     \geq \log(2)\frac{8}{9}\Big(\frac{N}{n}\Big)^2.
\end{align}
This shows the first part of the proposition.

 To show the second part assume that  \eqref{eq:equiv} holds. Then, since $g(0)<0$, it follows that $g(x)\leq 0$ for all $x\in [0,x_1^*]$, where
\begin{align*}
x_1^*&=\frac{-\Big(8n\frac{n}{N}v^2\Big)+\sqrt{\Big(8n\frac{n}{N}v^2\Big)^2-4\Big(\frac{4}{3} n v\Big)\Big(\frac{32}{3}\log(2) v^3\Big)}}{2\Big(-\frac{4}{3} n v\Big)}\\
&=   3\frac{n}{N}v -\Big(  2(n/N)^2 v^2- \frac{8}{9}    \log(2) v^2/n \Big)^{1/2}\\
&\geq (3-\sqrt{2})\frac{n}{N}v.
\end{align*}
The proof is complete.
\end{proof}
\subsection{Proof of Theorem \ref{prop1}} \label{sec:pprop1}

Theorem \ref{prop1} is  a consequence of Lemmas \ref{lemma:Cha0}-\ref{lemma:Midzuno} below, which respectively show that Assumption \ref{H:CNA} holds for Chao's procedure,  Till\'e's elimination procedure and the Generalized Midzuno method.

\begin{lemma} \label{lemma:Cha0}
Assumption \ref{H:CNA} is verified for Chao's procedure.
\end{lemma}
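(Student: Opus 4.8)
The plan is to verify the equivalent form \eqref{sec1:eq6} of Assumption \ref{H:CNA}, namely that for any distinct units $k,l,j_1,\ldots,j_p$ of $U$ one has $\pi_{k|j_1,\ldots,j_p,l}\leq \pi_{k|j_1,\ldots,j_p}$, where all the conditional probabilities refer to the final sample $S=S(N)$ produced by Algorithm \ref{chao:samp}. Since for each $t=n,\ldots,N$ the reservoir $S(t)$ is a fixed-size-$n$ sampling design on the partial population $\{1,\ldots,t\}$, I would prove the statement by induction on the step index $t$, the inductive claim being that $S(t)$ satisfies the conditional Sen-Yates-Grundy condition on $\{1,\ldots,t\}$; taking $t=N$ then yields the lemma.

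The base case $t=n$ is immediate, because $S(n)=\{1,\ldots,n\}$ is deterministic, so every conditional inclusion probability equals $0$ or $1$ and \eqref{sec1:eq6} holds trivially. For the inductive step I would exploit the transition rule of Chao's procedure: conditionally on $S(t-1)$ and on the independent uniform draw $u_t$, one has $S(t)=S(t-1)$ with probability $1-\pi_t(t)$, whereas with probability $\pi_t(t)$ unit $t$ enters the reservoir and a unit $k\in S(t-1)$ is ejected with probability $p_k(t)$. A useful simplification is that, away from the capping of probabilities at $1$, the ratio $\pi_k(t)/\pi_k(t-1)=\big(\sum_{l=1}^{t-1}x_l\big)/\big(\sum_{l=1}^{t}x_l\big)$ does not depend on $k$, so that $p_k(t)=1/n$ for every $k\in S(t-1)$: when unit $t$ is admitted it simply replaces a uniformly chosen element of the current reservoir.

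Using this description I would express the step-$t$ conditional probabilities $\pi^{(t)}_{k|A}$, and the pairwise ones $\pi^{(t)}_{kl|A}$, in terms of the step-$(t-1)$ quantities, distinguishing the cases $t\in A$ and $t\notin A$ according to whether the conditioning event forces the admission of unit $t$. Combining this decomposition with the inductive hypothesis and with the explicit ejection probabilities $p_k(t)$, the aim is to check that enlarging the conditioning set by one unit can only decrease the conditional selection probability of any remaining unit, that is, that the inequality \eqref{sec1:eq6} is propagated from step $t-1$ to step $t$.

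The main obstacle is the bookkeeping needed to transport the conditioning backward through the swap: conditioning on a set $A\subset S(t)$ mixes information about the reservoir $S(t-1)$ with the step-$t$ randomization, and disentangling the two while preserving the direction of the inequality is delicate. A second difficulty is the capping of the $\pi_k(t)$ at $1$, which breaks the clean uniform-ejection picture and forces a separate treatment of the saturated units (those with $\pi_k(t)=1$, which can never be ejected and hence have ejection probability $0$); the conditional Sen-Yates-Grundy inequality must be checked to remain valid in their presence. Alternatively, one could invoke the Feder-Mihail equivalence recalled before Theorem \ref{theo2} and argue that the reservoir update is a conditionally negatively associated transformation, but the inductive verification of \eqref{sec1:eq6} through the explicit transition probabilities seems the most transparent route.
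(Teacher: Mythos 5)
Your overall strategy --- induction on the reservoir step $t$, with the trivial base case $t=n$ where $S(n)=\{1,\ldots,n\}$ is deterministic --- is exactly the structure of the paper's proof. However, what you have written is a plan rather than a proof: the inductive step is never carried out. You state that ``the aim is to check'' that the inequality propagates from step $t-1$ to step $t$, and you then list the two places where this check is hard (transporting the conditioning on $A\subset S(t)$ backward through the swap, and the capping of the $\pi_k(t)$ at $1$), but you resolve neither. The missing ingredient is an exact one-step recursion for the conditional inclusion probabilities: the paper invokes Lemma 2 of \citet{cha:82}, which gives, for instance when $t\notin\{k,l\}\cup I$,
\begin{equation*}
\pi_{k|j_1,\ldots,j_p}(t) \;=\; \pi_{k|j_1,\ldots,j_p}(t-1)\,\frac{1-\pi_t(t)\sum_{i=1}^p p_{j_i}(t)-\pi_t(t)p_k(t)}{1-\pi_t(t)\sum_{i=1}^p p_{j_i}(t)},
\end{equation*}
and analogous formulas for $\pi_{l|\cdot}(t)$ and $\pi_{kl|\cdot}(t)$. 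From these the ratio $\pi_{kl|\cdot}(t)/\{\pi_{k|\cdot}(t)\pi_{l|\cdot}(t)\}$ is seen to be the step-$(t-1)$ ratio multiplied by an explicit factor of the form $\frac{(1-a-b-c)(1-a)}{(1-a-b)(1-a-c)}\leq 1$, and the induction closes after a three-way case analysis ($t\notin\{k,l\}\cup I$; $t=k$; $t\in I$). Your proposal contains none of this computation, so as it stands the lemma is not proved.

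Two further remarks. First, your ``useful simplification'' that $p_k(t)=1/n$ is correct in the uncapped case, but the argument must not rely on it: as you yourself note, capping destroys the uniform-ejection picture. The paper's recursions are stated for general ejection probabilities $p_k(t)$, and the key inequality $\Delta_i(t)\leq 1$ holds for arbitrary nonnegative $x_p,x_k,x_l$ with the relevant denominators positive, so the capped case requires no separate treatment --- this is precisely the point your sketch leaves open. Second, working with the product form \eqref{sec1:eq5} rather than \eqref{sec1:eq6} is what makes the induction multiplicative and hence tractable; your choice of \eqref{sec1:eq6} is equivalent in principle but does not telescope as cleanly across steps.
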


\begin{proof}
We prove equation \eqref{sec1:eq3} by induction, using the notation
    \begin{eqnarray*}
      \pi_{\cdot|j_1,\ldots,j_p}(t) \equiv Pr\{\cdot \in S(t)|j_1,\ldots,j_p \in S(t)\}.
    \end{eqnarray*}
At step $t=n$, the equation
    \begin{eqnarray*}
    \pi_{kl|j_1,\ldots,j_p}(n) & \leq & \pi_{k|j_1,\ldots,j_p}(n) \pi_{l|j_1,\ldots,j_p}(n)
    \end{eqnarray*}
is automatically fulfilled. We now treat the case of any step $t > n$. Recall that $I=\{j_1,\ldots,j_p\}$, as defined in Assumption (H1). We need to consider three cases: (i) either $t \neq k$, $t \neq l$ and $t \notin I$; (ii) or $t=k$, $t \neq l$ and $t \notin I$; (iii) or $t \neq k$, $t \neq l$ and $t \in I$.

 We consider the case (i) first. Making use of Lemma 2 in \citet{cha:82}, we obtain
    \begin{eqnarray*}
    \pi_{k|j_1,\ldots,j_p}(t) & = & \pi_{k|j_1,\ldots,j_p}(t-1) \frac{1-\pi_t(t) \sum_{i=1}^p p_{j_i}(t)-\pi_t(t) p_{k}(t)}{1-\pi_t(t) \sum_{i=1}^p p_{j_i}(t)}, \nonumber \\
    \pi_{l|j_1,\ldots,j_p}(t) & = & \pi_{l|j_1,\ldots,j_p}(t-1) \frac{1-\pi_t(t) \sum_{i=1}^p p_{j_i}(t)-\pi_t(t) p_{l}(t)}{1-\pi_t(t) \sum_{i=1}^p p_{j_i}(t)}, \\
    \pi_{kl|j_1,\ldots,j_p}(t) & = & \pi_{kl|j_1,\ldots,j_p}(t-1) \frac{1-\pi_t(t) \sum_{i=1}^p p_{j_i}(t)-\pi_t(t) p_{k}(t)-\pi_t(t) p_{l}(t)}{1-\pi_t(t) \sum_{i=1}^p p_{j_i}(t)}. \nonumber
    \end{eqnarray*}
This leads to
    \begin{eqnarray}
      \frac{\pi_{kl|j_1,\ldots,j_p}(t)}{\pi_{k|j_1,\ldots,j_p}(t) \pi_{l|j_1,\ldots,j_p}(t)} & = &
      \frac{\pi_{kl|j_1,\ldots,j_p}(t-1)}{\pi_{k|j_1,\ldots,j_p}(t-1) \pi_{l|j_1,\ldots,j_p}(t-1)} \times \Delta_1(t), \\
      \textrm{with } \Delta_1(t) & = & \frac{\{1-\pi_t(t)(x_p+x_k+x_l)\}\{1-\pi_t(t) x_p\}}{\{1-\pi_t(t)(x_p+x_k)\}\{1-\pi_t(t)(x_p+x_l)\}}, \nonumber
    \end{eqnarray}
where we note $x_p=\sum_{i=1}^p p_{j_i}(t)$, $x_k=p_{k}(t)$ and $x_l=p_{l}(t)$, and it is easy to prove that $\Delta_1(t) \leq 1$.

 We now consider the case (ii). Making use of Lemma 2 in \citet{cha:82}, we obtain
    \begin{eqnarray*}
    \pi_{t|j_1,\ldots,j_p}(t) & = & \frac{\pi_t(t)\{1- \sum_{i=1}^p p_{j_i}(t)\}}{1-\pi_t(t) \sum_{i=1}^p p_{j_i}(t)}, \nonumber \\
    \pi_{tl|j_1,\ldots,j_p}(t) & = & \pi_{l|j_1,\ldots,j_p}(t-1) \frac{\pi_t(t)\{1- \sum_{i=1}^p p_{j_i}(t)-p_l(t)\}}{1-\pi_t(t) \sum_{i=1}^p p_{j_i}(t)}. \nonumber
    \end{eqnarray*}
This leads to
    \begin{eqnarray*}
      \frac{\pi_{kl|j_1,\ldots,j_p}(t)}{\pi_{k|j_1,\ldots,j_p}(t) \pi_{l|j_1,\ldots,j_p}(t)} & = & \Delta_2(t), \\
      \textrm{with } \Delta_2(t) & = & \frac{(1-x_p-x_l)(1-\pi_t(t) x_p)}{(1-x_p)(1-\pi_t(t)x_p-\pi_t(t)x_l)}, \nonumber
    \end{eqnarray*}
and $\Delta_2(t) \leq 1$.

Finally, we consider the case (iii). Suppose without loss of generality that $j_n=t$. Then:
    \begin{eqnarray*}
    \pi_{k|j_1,\ldots,j_{p-1},t}(t) & = & \pi_{k|j_1,\ldots,j_{p-1}}(t-1) \frac{1- \sum_{i=1}^{p-1} p_{j_i}(t)- p_{k}(t)}{1- \sum_{i=1}^{p-1} p_{j_i}(t)}, \nonumber \\
    \pi_{l|j_1,\ldots,j_{p-1},t}(t) & = & \pi_{l|j_1,\ldots,j_{p-1}}(t-1) \frac{1- \sum_{i=1}^{p-1} p_{j_i}(t)- p_{l}(t)}{1- \sum_{i=1}^{p-1} p_{j_i}(t)}, \nonumber \\
    \pi_{kl|j_1,\ldots,j_{p-1},t}(t) & = & \pi_{kl|j_1,\ldots,j_{p-1}}(t-1) \frac{1- \sum_{i=1}^{p-1} p_{j_i}(t)- p_{k}(t)- p_{l}(t)}{1- \sum_{i=1}^{p-1} p_{j_i}(t)}.
    \end{eqnarray*}
This leads to
    \begin{eqnarray*}
      \frac{\pi_{kl|j_1,\ldots,j_{p-1},t}(t)}{\pi_{k|j_1,\ldots,j_{p-1},t}(t) \pi_{l|j_1,\ldots,j_{p-1},t}(t)} & = & \Delta_3(t), \\
      \textrm{with } \Delta_3(t) & = & \frac{(1-x_{p-1}-x_k-x_l)(1-x_{p-1})}{(1-x_{p-1}-x_k)(1-x_{p-1}-x_l)}, \nonumber
    \end{eqnarray*}
where we note $x_{p-1}=\sum_{i=1}^{p-1} p_{j_i}(t)$. We have $\Delta_3(t) \leq 1$, which completes the proof.
\end{proof}

\begin{lemma} \label{lemma:Tille}
Assumption \ref{H:CNA} is verified for   Till\'e's elimination procedure.
\end{lemma}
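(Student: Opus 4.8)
The plan is to mirror the inductive argument used for Chao's procedure in Lemma \ref{lemma:Cha0}, but to run the induction backward over the stages of the elimination algorithm. I introduce the notation $\pi_{\cdot|j_1,\ldots,j_p}(i) \equiv Pr\{\cdot \in S(i) \mid j_1,\ldots,j_p \in S(i)\}$, where $S(i)$ denotes the set of $i$ units that remain once Algorithm \ref{tille:samp} has reduced the population to size $i$. Since survival is monotone — a unit belonging to $S(i)$ belongs to $S(i')$ for every $i'\geq i$, because an eliminated unit is never reinstated — conditioning on $k,l,j_1,\ldots,j_p \in S(n)$ is the same as conditioning on their presence in every $S(i)$ with $i\geq n$. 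I aim to prove $\pi_{kl|j_1,\ldots,j_p}(i) \leq \pi_{k|j_1,\ldots,j_p}(i)\,\pi_{l|j_1,\ldots,j_p}(i)$ for all $i=N,N-1,\ldots,n$, the case $i=n$ being exactly the conclusion of Assumption \ref{H:CNA}.

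The base case $i=N$ is immediate: at stage $N$ every unit is retained with probability $1$ (the recomputation step forces $\pi_k(N)=1$ for all $k$, since $N$ probabilities bounded by $1$ and summing to $N$ must all equal $1$), so the inequality holds with equality. For the inductive step I pass from stage $i+1$ to stage $i$, at which exactly one unit is eliminated with the one-step probabilities of Algorithm \ref{tille:samp}. Conditioning on the survival of $\{k,l\}\cup\{j_1,\ldots,j_p\}$, I would write each of $\pi_{k|\cdots}(i)$, $\pi_{l|\cdots}(i)$ and $\pi_{kl|\cdots}(i)$ as the corresponding stage-$(i+1)$ quantity multiplied by a conditional survival factor, exactly as the three displayed recursions do in the proof of Lemma \ref{lemma:Cha0}. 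This reduces the claim to showing that the correlation ratio $\pi_{kl|\cdots}/(\pi_{k|\cdots}\pi_{l|\cdots})$ is multiplied, at each step, by a factor of the form
\[
\Delta(i) = \frac{(1-a-b-c)(1-a)}{(1-a-b)(1-a-c)} \leq 1,
\]
where $a,b,c$ are the normalized one-step elimination masses attached respectively to the conditioning units, to $k$, and to $l$. The bound $\Delta(i)\leq 1$ is elementary, being equivalent (after setting $u=1-a$) to $(u-b-c)u \leq (u-b)(u-c)$, i.e.\ to $bc\geq 0$. As in Lemma \ref{lemma:Cha0}, I expect to organize this into a small number of cases according to the role of the unit treated at stage $i$ — whether its inclusion probability has just saturated at $1$, whether it lies in the conditioning set, or whether it is a bystander — each case producing a $\Delta$ of this same multiplicative type.

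The step I expect to be the main obstacle is the derivation of the conditional one-step elimination probabilities, i.e.\ making precise the analogue of ``Lemma 2 in \citet{cha:82}'' for the elimination procedure. The difficulty is twofold. First, the eliminated unit at each stage is random rather than a prescribed incoming unit as in Chao's stream, so computing $Pr(m\text{ eliminated}\mid S(i+1),\ \{k,l\}\cup\{j_1,\ldots,j_p\}\text{ survive})$ requires the joint elimination law restricted to the event that none of the conditioning units is removed, and I must verify that this restriction still yields the clean product-of-survival-factors form that makes the $\Delta$ reduction possible. Second, the recomputation of inclusion probabilities to $1$ alters the effective set of units eligible for elimination, so the recursion has to be stated carefully once some $\pi_k(i)$ have reached $1$. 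Once these conditional survival factors are established, the verification that each resulting $\Delta(i)\leq 1$ is a routine computation identical in spirit to the three $\Delta$-bounds obtained for Chao's procedure, and the backward induction then closes the argument at $i=n$.
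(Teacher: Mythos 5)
Your plan is essentially the paper's proof: the paper simply writes the ratio $\pi_{kl|j_1,\ldots,j_p}/(\pi_{k|j_1,\ldots,j_p}\pi_{l|j_1,\ldots,j_p})$ as a closed product over the elimination steps, $\prod_{i=n}^{N-1}\frac{(1-a_i-b_i-c_i)(1-a_i)}{(1-a_i-b_i)(1-a_i-c_i)}$ with $a_i=\sum_{q}r_{j_q,i}$, $b_i=r_{k,i}$, $c_i=r_{l,i}$, and bounds each factor by $1$ exactly as you do. The conditional one-step survival factors that you flag as the main obstacle are precisely what the paper reads off directly from Algorithm \ref{tille:samp} (the joint survival probability of any set factorizes as $\prod_i(1-\sum_q r_{j_q,i})$), so your backward induction is just an unrolled version of the same identity.
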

\begin{proof}
 From Algorithm \ref{tille:samp}, we obtain
    \begin{eqnarray*}
      \frac{\pi_{kl|j_1,\ldots,j_p}}{\pi_{k|j_1,\ldots,j_p} \pi_{l|j_1,\ldots,j_p}} = \prod_{t=n}^{N-1} \frac{(1-\sum_{j=1}^p r_{j_i,t}-r_{k,t}-r_{l,t})(1-\sum_{j=1}^p r_{j_i,t})}{(1-\sum_{j=1}^p r_{j_i,t}-r_{k,t})(1-\sum_{j=1}^p r_{j_i,t}-r_{l,t})}\leq 1.
    \end{eqnarray*}
\end{proof}

\begin{lemma} \label{lemma:Midzuno}
Assumption \ref{H:CNA} is verified for the Generalized Midzuno method.
\end{lemma}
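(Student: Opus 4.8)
The plan is to avoid any direct manipulation of the conditional inclusion probabilities of the Midzuno sample — which is awkward here because, unlike for Chao's and Till\'e's procedures, the second-order inclusion probabilities of the Midzuno method have no convenient closed form. Instead I would exploit the duality between the generalized Midzuno method and Till\'e's elimination procedure, and reduce the claim to Lemma \ref{lemma:Tille} via the fact that Assumption \ref{H:CNA} is preserved under complementation.

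First I would make the duality precise. Setting $x_k = 1-\pi_k$ and noting that $\sum_{l\in U}(1-\pi_l)=N-n$, the quantities $\pi_k(i)=i(1-\pi_k)/(N-n)$ driving Algorithm \ref{mid:samp} coincide with the quantities $i\,x_k/\sum_{l}x_l$ that drive Till\'e's Algorithm \ref{tille:samp} when the auxiliary sizes are taken to be $x_k=1-\pi_k$ and the target size is $N-n$; the associated step probabilities $p_{k,i}$ and $r_{k,i}$ therefore coincide as well, including under the capping-and-recomputation rule, which is identical in both algorithms. Consequently, running Midzuno's selection steps is exactly running this Till\'e procedure's elimination steps: the $n$ units \emph{selected} by Midzuno are precisely the $n$ units \emph{eliminated} by Till\'e, and the $N-n$ surviving units form a Till\'e sample $\bar S$ of size $N-n$ with first-order inclusion probabilities $1-\pi_k$. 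Writing $I_U$ for the membership vector of the Midzuno sample $S$, this means that $1-I_U$ is the membership vector of $\bar S$, which also confirms that $S$ has the prescribed inclusion probabilities $\pi_k$.

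Next I would invoke Lemma \ref{lemma:Tille}, which shows that $\bar S$ satisfies Assumption \ref{H:CNA}; by the Feder-Mihail equivalence recalled before Theorem \ref{theo2}, $\bar S$ is thus CNA. The key observation is then that the CNA property is preserved under complementation. Conditioning the vector $1-I_U$ on the values of a subset of its coordinates is the same event as conditioning $I_U$ on the complementary values, and the conditional law of the remaining coordinates of $1-I_U$ is obtained from that of $I_U$ by the coordinatewise map $z\mapsto 1-z$. Since this map is decreasing, and since the negative-association inequality \eqref{sec3:eq1} holds for any pair of non-increasing functions whenever it holds for every pair of non-decreasing functions — replace $f,g$ by $-f,-g$, whose covariance is unchanged — negative association of the remaining coordinates is inherited by $I_U$ after any such conditioning. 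Hence $I_U$ is CNA, and a final appeal to the Feder-Mihail equivalence yields Assumption \ref{H:CNA} for the Midzuno sample.

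The main obstacle I anticipate is the first step rather than the last: one must verify carefully that the selection-versus-elimination bookkeeping matches exactly — in particular that Midzuno's selected units correspond to Till\'e's eliminated units rather than to the survivors — and that the recomputation performed when some $\pi_k(i)$ exceeds $1$ does not break the identification. The degenerate units with $\pi_k=1$, for which $1-\pi_k=0$, should be set aside, since they belong to every sample and are irrelevant to the negative-association inequalities among the remaining units. Once this identification is secured, the preservation of CNA under complementation is essentially immediate, so no telescoping computation of the type used in Lemmas \ref{lemma:Cha0} and \ref{lemma:Tille} is required.
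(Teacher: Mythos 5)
Your proposal is correct and follows essentially the same route as the paper: identify the generalized Midzuno method as the complementary design of Till\'e's elimination procedure with inclusion probabilities $1-\pi_U$, invoke Lemma \ref{lemma:Tille}, and transfer the CNA property through the coordinatewise map $z\mapsto 1-z$. The only cosmetic difference is that the paper cites Till\'e (2011, Section 6.3.5) for the duality rather than re-deriving it, and handles the monotonicity step via $\bar f(x)=1-f(1-x)$ instead of your equivalent $-f,-g$ trick.
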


\begin{proof}  It can be shown \citep[][Section 6.3.5]{til:06} that the generalized Midzuno method is the complementary sampling design of Till\'e's elimination procedure. More precisely, if $I_U$ is generated according to the Generalized Midzuno Method with inclusion probabilities $\pi_U$, then $J_U=1-I_U$ may be seen as generated according to Till\'e's elimination procedure with inclusion probabilities $1-\pi_U$.

 The proof is therefore similar to that in \citet[][Section 4.1]{esa:pro:wal:67}. Let $A,B,C$ denote three disjoint subsets in $U$, and let $f$ and $g$ denote two non-decreasing functions. The functions
    \begin{eqnarray*}
      \bar{f}(x) = 1-f(1-x) & \textrm{ and } & \bar{g}(x) = 1-g(1-x)
    \end{eqnarray*}
are also non-decreasing and
    \begin{align*}
    Cov\left[f(I_i,i \in A),g(I_j,j \in B)|I_k,k \in C\right]&=Cov\left[\bar{f}(J_i,i \in A),\bar{g}(J_j,j \in B)|J_k,k \in C\right]\\
    &\leq 0
    \end{align*}
where the inequality uses the fact that  Till\'e's elimination procedure is CNA, bt Lemma \ref{lemma:Tille}.
\end{proof}

\subsection{Proof of Theorem \ref{theo3}} \label{app:brew}

 Brewer's method is presented in Algorithm \ref{bre:samp} as a particular case of Algorithm \ref{algo1}, where $T=n$ and where, for all $t$, $\delta(t)$ is such that
    \begin{eqnarray*}
      \delta_k(t) & = & \left\{\begin{array}{ll}
                                 0 & \textrm{if } k \in \{J_1,\ldots,J_{t-1}\}, \\
                                 1-\pi_{J_t}(t-1) & \textrm{if } k=J_t, \\
                                 - \frac{\pi_k(t-1)\{1-\pi_{J_t}(t-1)\}}{n-t+1-\pi_{J_t}(t-1)} & \textrm{otherwise.}
                               \end{array}
       \right.
    \end{eqnarray*}
This leads to
    \begin{eqnarray} \label{pbrew:1}
      \sum_{k \in U(t)} |\delta_k(t)| & = & \{1-\pi_{J_t}(t-1)\} + \sum_{k \in U(t) \setminus J_t} \frac{\pi_k(t-1)(1-\pi_{J_t}(t-1))}{n-t+1-\pi_{J_t}(t-1)} \nonumber \\
      & = & \{1-\pi_{J_t}(t-1)\} \bigg(1+\sum_{k \in U(t) \setminus J_t} \frac{\pi_k(t-1)}{n-t+1-\pi_{J_t}(t-1)} \bigg) \nonumber \\
      & = & 2 \times \{1-\pi_{J_t}(t-1)\},
    \end{eqnarray}
where the third line in \eqref{pbrew:1} follows from the identity
    \begin{eqnarray*}
      \sum_{k \in U(t)} \pi_k(t-1) & = & n-t+1.
    \end{eqnarray*}
This shows that
\begin{align*}
       Pr\Big( \sum_{k \in U(t)} |\delta_k(t)|  \leq 2\Big)=1,\quad t=1,\dots,n
\end{align*}
and the result follows from Theorem \ref{theo1}.

\end{document}